\renewcommand{\Im}{\operatorname{Im}}
\renewcommand{\Re}{\operatorname{Re}}
\renewcommand{\Im}{\operatorname{Im}}
\renewcommand{\(}{\left\(}
\renewcommand{\)}{\right\)}
\renewcommand{\[}{\left\[}
\renewcommand{\]}{\right\]}
\numberwithin{equation}{section}
\theoremstyle{plain}
\newtheorem{theorem}{Theorem}[section]
\newtheorem{lemma}[theorem]{Lemma}
\newtheorem{remark}[]{Remark}
\newtheorem{corollary}[theorem] {Corollary}
\def\proof{\@ifnextchar[{\@oproof}{\@nproof}}
\def\@oproof[#1][#2]{\trivlist\item[\hskip\labelsep\textit{#2 Proof of\
		#1.}~]\ignorespaces}
\def\@nproof{\trivlist\item[\hskip\labelsep\textit{Proof.}~]\ignorespaces}
\begin{document}
	\title[An asymptotic expansion for a twisted Lambert series]{An asymptotic expansion for a twisted Lambert series associated to a cusp form and the M\"{o}bius function: level aspect}  
	
	 \author{Bibekananda Maji}
\address{Bibekananda Maji\\ Department of Mathematics \\
Indian Institute of Technology Indore \\
Indore, Simrol, Madhya Pradesh 453552, India.} 
\email{bibekanandamaji@iiti.ac.in}

\author{Sumukha Sathyanarayana}
\address{Sumukha Sathyanarayana\\Department of Mathematical and Computational Sciences\\ National Institute of Technology Karnataka, Suratkal \\
	Srinivasnagar, Mangalore 575025, Karnataka, India.} 
\email{neerugarsumukha@gmail.com\\ 
	sumukhas.177ma005@nitk.edu.in}

	\author{B. R.  Shankar }
	\address{Shankar B R\\Department of Mathematical and Computational Sciences\\ National Institute of Technology Karnataka, Suratkal \\
		Srinivasnagar, Mangalore 575025, Karnataka, India.}
	\email{shankarbr@gmail.com}

	\thanks{2010 \textit{Mathematics Subject Classification.} Primary 11M06, 11M26; Secondary 11N37.\\
		\textit{Keywords and phrases.} Lambert series, Riemann zeta function, Dirichlet $L$-function, non-trivial zeros,  cusp forms. }
	
	\maketitle
	
	\begin{abstract}
Recently,  Juyal,  Maji and Sathyanarayana have studied a Lambert series associated with a cusp form over the full modular group and the M\"{o}bius function. In this paper, we 
investigate the Lambert series $\sum_{n=1}^{\infty}[a_f(n)\psi(n)*\mu(n)\psi'(n)]\exp(-ny),$ where $a_f(n)$ is the $n$th Fourier coefficient of a cusp form $f$ over any congruence subgroup, and $\psi$ and $ \psi'$ are primitive Dirichlet characters. This extends the earlier work to the case of higher level subgroups and also gives a character analogue.
	\end{abstract}

	\section{Introduction}
	Let ${a(n)}$ be a sequence of complex numbers. The Lambert series is defined by $ F(q)= \sum_{n=1}^{\infty} a(n)  \frac{q^n}{1-q^n} $ for $|q|<1$. By setting $b(n)=\sum_{d | n} a(d)$, we have $F(q)= \sum_{n=1}^{\infty} b(n) q^n$. Over the years, this family of Lambert series has been studied by many mathematicians \cite{Agarwal},  \cite{Berndt}. Hardy and Ramanujan \cite{HR} used the behaviour of certain Lambert series as $q \rightarrow 1$ to derive an asymptotic expansion for a general partition function. 
	
	 Let $\Delta(z):=\sum_{n=1}^{\infty} \tau(n) \exp(2 \pi i n z)$ be the Ramanujan cusp form of weight 12. In 1981, Zagier \cite[p.~417]{Zag92} predicted that the constant term of the automorphic form $y^{12} |\Delta(z)|^2$,  that is, the Lambert series 
	 $a_0(y):=y^{12} \sum_{n=1}^{\infty} \tau^2(n) \exp({-4 \pi ny})$
	 has an asymptotic expansion 
	of the form
	 \begin{equation} \label{zagier}
	  a_0(y)  \sim C+ \sum_{\rho} C_{\rho} y^{ 1 - \frac{\rho}{2}} \quad \textrm{as}\,\,  y \rightarrow 0^+, 
	 \end{equation} 	 
	 where $C$ and $C_\rho$ are some constants and the sum over $\rho$ runs through all non-trivial zeros of $\zeta(s)$. 
Further, he plotted the graph of $a_0(y)$ and found an oscillatory behaviour as $y \rightarrow 0^+$. Under the assumption of the Riemann hypothesis, \eqref{zagier} can be rewritten as
\begin{align*}
a_0(y) \sim C + y^{3/4} \sum_{n=1}^\infty a_n \cos\left( \phi_n + \frac{t_n\log(y)}{2}  \right), \quad {\rm as}\,\, y \rightarrow 0^{+}, 
\end{align*}
where $\rho = \frac{1}{2} \pm i t_n$ and $ a_n$, $\phi_n$ are some constants. 
This asymptotic expansion indicates that $a_0(y)$ has an oscillatory behaviour as $y \rightarrow 0^{+}$ due to the presence of cosine functions.  
Although this conjecture was overlooked by mathematicians for almost two decades,  it was eventually settled in 2000  by Hafner and Stopple \cite{HS}. Later, Chakraborthy et al. \cite{CKM} observed a similar phenomenon for any Hecke eigen form over the full modular group and they \cite{CJKM} generalized it for congruence subgroups.  

Recently, Juyal, Maji and Sathyanarayana \cite{JMS-2021} studied the Lambert series $\sum_{n=1}^\infty (a_f * \mu)(n) \exp(-ny)$, where $a_f(n)$ is the  $n$th Fourier coefficient of a cusp form $f$ of weight $k$ over $\textrm{SL}_2(\mathbb{Z})$ and $\mu(n)$ is the M\"{o}bius function. They derived the following exact formula in terms of the non-trivial zeros of the Riemann zeta function $\zeta(s)$. In \cite{JMS-2}, they have also studied a similar Lambert series associated with the symmetric square $L$-function. 
 \begin{theorem}\cite[Theorem 2.1]{JMS-2021}\label{JMS_main theorem}
Let $a_f(n)$ be the $n$th Fourier coefficient of the cusp form $f \in S_k(SL_2(\mathbb{Z}))$. Assume that all non-trivial zeros of $\zeta(s)$ are simple. Then for any positive $y$,
	\begin{align*}
\sum_{n=1}^{\infty}[a_f(n)*\mu (n)]e^{-ny}  & =    2\, \Gamma(k) \left(\frac{i}{2\pi}\right)^k \sum_{n=1}^{\infty}\frac{A_f^*(n)}{n^k}  \left[ {}_2F_1 \left(\frac{k}{2}, \frac{k+1}{2}; \frac{1}{2}; -\frac{y^2}{4n^2\pi^2} \right)-1\right] \\
& + \mathcal{P}(y), 
	\end{align*}
	where
	\begin{equation}\label{Af(n)}
A_f^{*}(n):= \left(a_f*\mu_{k}\right)(n), \quad {\rm with} \quad \mu_{k}(n)=\mu(n)n^{k-1},
\end{equation}
	and
	\begin{align}\label{P(y)}
\mathcal{P}(y) = 	\sum_{\rho}\frac{L(f,\rho)\Gamma(\rho)}{\zeta'(\rho)}\frac{1}{y^\rho},
	\end{align}
the sum over $\rho$ which runs through all non-trivial zeros of $\zeta(s)$, involves bracketing the terms so that the terms corresponding to $\rho_1$ and $\rho_2$ are included in the same bracket if they satisfy
	\begin{align}\label{bracketing}
|\Im(\rho_1) - \Im(\rho_2)| < \exp \left( -C \frac{\Im(\rho_1)}{\log(\Im(\rho_1) )} \right) + \exp \left( -C \frac{\Im(\rho_2)}{\log(\Im(\rho_2) )} \right),
\end{align}
where $C$ is some positive constant.
\end{theorem}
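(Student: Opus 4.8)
The plan is to realize the Lambert series as a Mellin--Barnes integral and then to move the line of integration to the left, collecting residues. Starting from the Cahen--Mellin integral $e^{-x}=\frac{1}{2\pi i}\int_{(c)}\Gamma(s)x^{-s}\,ds$, valid for $c>0$ and $x>0$, I would write
\begin{equation*}
\sum_{n=1}^{\infty}[a_f(n)*\mu(n)]\,e^{-ny}=\frac{1}{2\pi i}\int_{(c)}\Gamma(s)\,y^{-s}\,\frac{L(f,s)}{\zeta(s)}\,ds,
\end{equation*}
with $c$ chosen large enough (say $c>k$) that the Dirichlet series $\sum_{n}(a_f*\mu)(n)n^{-s}=L(f,s)/\zeta(s)$ converges absolutely, so that the interchange of summation and integration is legitimate. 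The entire theorem then reduces to evaluating this integral by the residue theorem.

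Next I would catalogue the poles of the integrand $G(s)=\Gamma(s)y^{-s}L(f,s)/\zeta(s)$ to the left of $\Re(s)=c$. Since $f$ is a cusp form, $L(f,s)$ is entire, so poles arise only from $\Gamma(s)$ (at $s=0,-1,-2,\dots$) and from the zeros of $\zeta(s)$. Assuming the non-trivial zeros are simple, each $\rho$ gives a simple pole with residue $\Gamma(\rho)y^{-\rho}L(f,\rho)/\zeta'(\rho)$, and summing these produces precisely $\mathcal{P}(y)$ as in \eqref{P(y)}. The decisive bookkeeping is at the negative integers: since the completed $L$-function $(2\pi)^{-s}\Gamma(s)L(f,s)$ is entire, the factor $1/\Gamma(s)$ forces $L(f,s)$ to vanish at $s=0,-1,-2,\dots$, and these trivial zeros cancel the poles of $\Gamma(s)$. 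Consequently $s=0$ and the negative \emph{odd} integers contribute nothing, while at a negative even integer $s=-2m$ the simple pole of $1/\zeta(s)$ coming from the trivial zero of $\zeta$ survives, leaving a simple pole with residue $y^{2m}L'(f,-2m)\big/\bigl((2m)!\,\zeta'(-2m)\bigr)$.

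The third step is to convert this family of residues into the hypergeometric main term. Using the functional equation of $\zeta(s)$ to evaluate $\zeta'(-2m)$, and the functional equation $L(f,s)=i^{k}(2\pi)^{2s-k}\tfrac{\Gamma(k-s)}{\Gamma(s)}L(f,k-s)$ to evaluate $L'(f,-2m)$, the residue at $s=-2m$ becomes a constant multiple of $\Gamma(k+2m)\,L(f,k+2m)\big/\bigl((2m)!\,\zeta(2m+1)\bigr)\,y^{2m}$. Recognizing that $L(f,k+2m)/\zeta(2m+1)=\sum_{n}A_f^{*}(n)\,n^{-(k+2m)}$ with $A_f^{*}$ as in \eqref{Af(n)}, and then summing over $m$, I would apply the Pochhammer duplication identities $(k)_{2m}=4^{m}(k/2)_m((k+1)/2)_m$ and $(2m)!=4^{m}(1/2)_m\,m!$ to collapse the $m$-sum into ${}_2F_1\!\left(\tfrac{k}{2},\tfrac{k+1}{2};\tfrac12;-\tfrac{y^2}{4n^2\pi^2}\right)$. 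The absence of the $m=0$ term accounts for the $-1$, and the accumulated constants combine, via the root number $i^{k}$, into the prefactor $2\,\Gamma(k)\,(i/2\pi)^{k}$.

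Finally, and this is where I expect the genuine difficulty, I would justify the contour shift: that the horizontal segments and the far-left vertical line contribute nothing in the limit. This requires lower bounds for $|\zeta(s)|$ along suitable horizontal lines chosen between consecutive zeros, together with the standard growth estimates for $\Gamma(s)$ and $L(f,s)$ in vertical strips. The analytically delicate point is the convergence of $\mathcal{P}(y)$ over the non-trivial zeros: because $1/\zeta'(\rho)$ is not uniformly controlled, the series converges only after the zeros are grouped according to the bracketing rule \eqref{bracketing}, which collects zeros lying abnormally close together so that their residues may be estimated in aggregate. Making this bracketing rigorous, rather than the residue computation itself, is the heart of the argument.
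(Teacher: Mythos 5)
Your proposal diverges from the paper's argument at the decisive step, and the divergence introduces a genuine gap. The paper shifts the contour only to $\Re(s)=\lambda$ with $-1<\lambda<0$, collecting residues solely at the non-trivial zeros (and at $s=0$, which is vacuous in level one); the leftover vertical integral is then evaluated by applying the functional equations of $L(f,s)$ and $\zeta(s)$, replacing $s$ by $k-s$, expanding $L(f,s)/\zeta(s-k+1)$ as a Dirichlet series on $\Re(s)=k-\lambda>k$, and identifying the resulting Mellin--Barnes integral as a Meijer $G$-function, which Slater's theorem converts into the (analytically continued) ${}_2F_1$. You instead push the contour to $-\infty$ and collect the residues at the trivial zeros $s=-2m$, resumming them into the hypergeometric series. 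Your residue bookkeeping is correct (the poles of $\Gamma(s)$ are indeed cancelled by the trivial zeros of $L(f,s)$, and the residue at $s=-2m$ is $y^{2m}L'(f,-2m)/((2m)!\,\zeta'(-2m))$), but the contour push fails for large $y$. After the functional equations, the exponential-in-$\sigma$ part of the integrand on a far-left vertical line $\Re(s)=\sigma$ is $(2\pi)^{2\sigma}\cdot 2^{-\sigma}\pi^{-\sigma}\cdot y^{-\sigma}=(2\pi/y)^{\sigma}$, which tends to $0$ as $\sigma\to-\infty$ only when $y<2\pi$. Correspondingly, the residue series you produce, $\sum_{m\ge1}\frac{(k/2)_m((k+1)/2)_m}{(1/2)_m\,m!}\bigl(-\tfrac{y^2}{4n^2\pi^2}\bigr)^m$, is exactly the Gauss series of the ${}_2F_1$, whose radius of convergence forces $y<2\pi n$; for $n=1$ this means your derivation proves the identity only on $0<y<2\pi$, whereas the theorem asserts it for all $y>0$ (and the paper's numerical table uses $y=\pi^{\sqrt3}>2\pi$).

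This is repairable --- one could prove the identity for $0<y<2\pi$ by your method and then continue analytically in $y$ to the half-plane $\Re(y)>0$, with the ${}_2F_1$ understood as its continuation past the branch point --- but that continuation step is precisely the content you omit, and it requires verifying that $\mathcal{P}(y)$ and the bracketed $n$-sum are themselves analytic in $y$. The paper sidesteps the issue entirely: the integral representation \eqref{MeijerG} of $G^{1,2}_{2,2}$ evaluated at $z=y^2/(4n^2\pi^2)>0$ lands at the argument $-z<0$ of ${}_2F_1$, which lies off the cut $[1,\infty)$, so the formula is valid for every $y>0$ in one stroke. (A minor further point: the paper must perform a second small contour shift, from $\Re(s)=\tfrac{k-\lambda}{2}$ to $c'\in(\tfrac{k}{2}-1,\tfrac{k}{2})$, to separate the poles correctly before invoking the $G$-function definition; the residue at $s=k/2$ picked up there is what produces the ``$-1$'' inside the bracket, which in your scheme appears instead as the missing $m=0$ term.) Your treatment of the remaining analytic issues --- the lower bound $1/|\zeta(\sigma+iT)|<e^{BT}$ on well-chosen ordinates, Stirling, the polynomial bound on $L(f,s)$, and the bracketing of zeros to make $\mathcal{P}(y)$ converge --- matches the paper's Lemmas and is correctly identified as the delicate part of that portion of the argument.
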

In this paper, we investigate the following Lambert series:
\begin{align}\label{main Lambert series}
A_f(y):= \sum_{n=1}^{\infty}[a_f(n)\psi(n)*\mu(n)\psi'(n)]\exp(-ny),
\end{align}
 where $a_f(n)$ is the  $n$th Fourier coefficient of a cusp form $f$ of weight $k$, level $Q$ and Nebentypus $\chi$, and $ \psi, \psi'$ are primitive Dirichlet characters. We derive an exact formula for the above Lambert series \eqref{main Lambert series} involving non-trivial zeros of $L(s,\psi')$ and a generalized hypergeometric function, and there by generalize Theorem \ref{JMS_main theorem} in two different directions.  On the one hand, our work generalize Theorem \ref{JMS_main theorem} for congruence subgroups and on the other hand, we also get a character analogue.  As an application, we also derive an asymptotic expansion and establish an oscillatory behavior of $y^{1/2} A_f(y)$ as $ y \rightarrow 0^{+}.$
\section{Preliminaries}
Let $k$ and $Q$ be two positive integers. Let $\chi $ 
be a Dirichlet character modulo $Q$ and the Gauss sum $\epsilon_{\chi}$  is defined by $\epsilon_{\chi} :=\sum_{j=1}^{Q} \chi(j) \exp\left({\frac{2 \pi i j}{Q}} \right)$.
 Define
\begin{align*}
\Gamma_{0}(Q):= \left\{  \left(\begin{array}{c c}  a & b \\
c & d  \end{array} \right) \in {\rm SL}_2(\mathbb{Z}) :    { c} \equiv 0 \pmod Q  \right\}.
\end{align*}
We denote $ S_k( \Gamma_{0}(Q), \chi)$ to be the space of cusp forms of weight $k$, level $Q$, and Nebentypus character $\chi$. Consider $f(z) \in  S_k( \Gamma_{0}(Q), \chi)$ with the Fourier series expansion
\begin{align}\label{Fourier}
f(z) = \sum_{n=1}^\infty a_{f}(n) \exp(2 \pi i n z), \quad \forall z \in \mathbb{H}.
\end{align}
The L-function associated to $f$ is defined by $L_f(s):= \sum_{n=1}^{\infty} \frac{a_{f}(n)}{n^s}$, which is absolutely convergent for $\Re(s)>\frac{k+1}{2}$. The analytic extension and functional equation for $L_f(s)$ was proved by Hecke.
It is known that for a positive integer $r$ such that $(Q,r)=1$ and  a primitive Dirichlet character $\psi$ modulo $r$, the $\psi$-twist of $f$ defined by 
$$
f_\psi(z)= \sum_{n=1}^\infty a_{f}(n) \psi(n) \exp(2 \pi i n z), 
$$
 is a element of $S_k( \Gamma_{0}(N), \chi \psi^2)$,  where $N=Qr^2$.
Hence it is natural to consider the following Dirichlet series:
 $$ L_f(s, \psi):= \sum_{n=1}^{\infty} \frac{a_{f}(n) \psi(n)}{n^s}, \quad \Re(s) > \frac{k+1}{2}.$$ 
 It is known as the $\psi$-twist of $L_f(s)$. We can analytically extend $L_f(s, \psi)$ into an entire function and the completed $L$-function $\Lambda_f(s, \psi) = \left(\frac{\sqrt{N}}{2 \pi} \right)^s \Gamma(s) L_f(s, \psi) $ satisfies the following functional equation \cite[p.~131]{Murty} 
 \begin{equation}\label{FE}
 \Lambda_f(s, \psi)  = \frac{i^k \epsilon_{\chi}^2 }{r} \chi(r) \psi(Q)  \Lambda_g(k-s, \overline{ \psi}) ,
 \end{equation}
 where $$g(z)=Q^{k/2} (Qz)^{-k} f\left(-\frac{1}{Qz}\right):=\sum_{n=1}^{\infty} a_g(n) \exp(2\pi i n z) \in S_k(\Gamma_{0}(Q), \overline{\chi} )).$$
Now we introduce two well-known special functions which are essential in this work.
Let $a_1, \cdots, a_p$ and $b_1, \cdots, b_q$ be complex numbers such that $b_j \ne 0, -1, -2,...$ for $ 1 \le j \le q$. The generalized hypergeometric series ${}_pF_q \left( a_1, \cdots, a_p;\, b_1, \cdots, b_q;\, z \right)$ \cite[p. 404, Equation 16.2.1]{NIST} is defined by
\begin{align}\label{hyper}
{}_pF_q \left( a_1, \cdots, a_p;\, b_1, \cdots, b_q;\, z \right):= \sum_{n=0}^{\infty} \frac{(a_1)_n \cdots (a_p)_n}{(b_1)_n\cdots (b_q)_n} \frac{z^n}{n!},
\end{align}
where $(a)_n:=\frac{\Gamma(a+n)}{\Gamma(a)}$. This series converges for all complex values of $z$ if $p \leq q$.  For $p=q+1$,  it converges for $|z|<1$ an further it can be analytically continued to the complex plane with a branch cut from $1$ to $+\infty$. 

Let $m,n,p,q$ be non-negative integers such that $0\leq m \leq q$, $0\leq n \leq p$. Let $a_1, \cdots, a_p$ and $b_1, \cdots, b_q$ be complex numbers such that $a_i - b_j \not\in \mathbb{N}$ for $1 \leq i \leq n$ and $1 \leq j \leq m$. Then the Meijer $G$-function \cite[p.~415, Definition 16.17]{NIST} is defined by the line integral:
\begin{align}\label{MeijerG}
G_{p,q}^{\,m,n} \!\left(  \,\begin{matrix} a_1,\cdots , a_p \\ b_1, \cdots , b_q \end{matrix} \; \Big| z   \right) := \frac{1}{2 \pi i} \int_L \frac{\prod_{j=1}^m \Gamma(b_j - s) \prod_{j=1}^n \Gamma(1 - a_j +s) z^s  } {\prod_{j=m+1}^q \Gamma(1 - b_j + s) \prod_{j=n+1}^p \Gamma(a_j - s)}\mathrm{ds}.
\end{align}
Here we note that the line of integration $L$, goes from $-i \infty$ to $+i \infty$,  separates the poles of the factors $\Gamma(1-a_j+s)$ from the factors  $\Gamma(b_j-s)$. This integral converges if $p+q < 2(m+n)$ and $|\arg(z)| < (m+n - \frac{p+q}{2}) \pi$. Slater's theorem \cite[p.~415, Equation 16.17.2]{NIST} gives a relation between Meijer $G$-function and generalized hypergeometric functions as follows:
If $p \leq q$, and $ b_j - b_k \not\in \mathbb{Z}$ for $j\neq k$, $1 \leq j, k \leq m$, then 
\begin{align}\label{Slater}
& G_{p,q}^{\,m,n} \!\left(   \,\begin{matrix} a_1, \cdots , a_p \\ b_1, \cdots , b_q \end{matrix} \; \Big| z   \right)  \\
& \quad = \sum_{k=1}^{m} A_{p,q,k}^{m,n}(z) {}_p F_{q-1} \left(  \begin{matrix}
1+b_k - a_1,\cdots, 1+ b_k - a_p \\
1+ b_k - b_1, \cdots, \star, \cdots, 1 + b_k - b_q 
\end{matrix} \Big| (-1)^{p-m-n} z  \right), \nonumber 
\end{align}
where $\star$ suggests that the term $1 + b_k - b_k$ is excluded and 
\begin{align*}
A_{p,q,k}^{m,n}(z) := \frac{ z^{b_k}  \prod_{ j=1,  j\neq k}^{m} \Gamma(b_j - b_k ) \prod_{j=1}^n   \Gamma( 1 + b_k -a_j ) }{ \prod_{j=m+1}^{q} \Gamma(1 + b_k - b_{j}) \prod_{j=n+1}^{p} \Gamma(a_{j} - b_k)  }.
\end{align*}
 \section{Statement of Results}
 For a fixed natural number $k$, we define $\mu_k(n) := \mu(n) n^{k-1}$.
 Let $\psi'$ be a primitive Dirichlet character modulo $M$, and   $$
 a:= 
 \begin{cases}
 0,              & \psi'(-1)=1,\\
 1,             & \psi'(-1)= -1.
 \end{cases}
 $$
The following theorem is the main result of this paper.
		\begin{theorem}\label{Main theorem2}
			Let $f \in S_k(\Gamma_{0}(Q),\chi)$ be a cusp form with the $n$th Fourier coefficient as $a_f(n)$.  Let $ \psi$ and $\psi'$ be primitive Dirichlet characters of modulus $r$ and $M$ respectively. Assume that all non-trivial zeros of $L(s,\psi')$ are simple.
			For any positive $y$,  we have
			\begin{align*}
			& \sum_{n=1}^{\infty}  [a_f(n)\psi(n)*\mu(n)\psi'(n)]\exp(-ny)  = 2 N^{k/2}  \Gamma(k+a)  \left( \frac{y N}{M}  \right)^a \left(\frac{i}{2\pi}\right)^{k+a} \frac{\chi(r) \psi(Q) \epsilon_\psi^2}{r\epsilon_{\psi'}}\\
			& \times \sum_{n=1}^{\infty}\frac{[a_g(n)\overline{\psi}(n)*\mu_k(n)\overline{\psi'}(n)]}{n^{k+a}}  
			 \left[ {}_2F_1 \left(\frac{k+a}{2}, \frac{k+1+a}{2}; \frac{1+2a }{2}; -\frac{N^2y^2}{4 M^2n^2\pi^2} \right)-(1-a) \right] \\
			& \hspace{7cm} +\mathcal{R}(y) + R_0,
			\end{align*}
			where $N= Q r^2$,  and the terms $R_0$ and $\mathcal{R}(y)$ are defined as
			\begin{align}\label{R(y)}
R_0= 
	 \begin{cases}
	\frac{L_f'(0, \psi)}{L'(0,\psi')},              &  {\rm if} \,  a=0 \, {\rm and} \, M>1, \\
	0,  & \text{otherwise},
	\end{cases} \quad	{\rm and} \quad					
			\mathcal{R}(y) = 	\sum_{\rho}\frac{L_f(\rho, \psi)\Gamma(\rho)}{L'(\rho, \psi')}\frac{1}{y^\rho},
			\end{align}
			where the sum over $\rho$ in $\mathcal{R}(y)$, runs through all non-trivial zeros of $L(s,\psi')$, involves bracketing the terms so that the terms corresponding to $\rho_1$ and $\rho_2$ are included in the same bracket if they satisfy
			\begin{align}\label{bracketing}
			|\Im(\rho_1) - \Im(\rho_2)| < \exp \left( - \frac{ C |\Im(\rho_1)|}{\log(|\Im(\rho_1) |+3)} \right) + \exp \left( - \frac{C |\Im(\rho_2)|}{\log(|\Im(\rho_2) |+3)} \right),
			\end{align}
			where $C$ is some positive constant.
		\end{theorem}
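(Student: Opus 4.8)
\emph{Plan.} The plan is to realize $A_f(y)$ as an inverse Mellin transform and then move the line of integration to the left. Since $\sum_{n\geq 1}\mu(n)\psi'(n)n^{-s}=1/L(s,\psi')$, the Dirichlet series attached to the convolution $a_f\psi*\mu\psi'$ is $L_f(s,\psi)/L(s,\psi')$, so the Cahen--Mellin formula $e^{-ny}=\frac{1}{2\pi i}\int_{(c)}\Gamma(s)(ny)^{-s}\,ds$ gives, for $c>\tfrac{k+1}{2}$,
\begin{align*}
A_f(y)=\frac{1}{2\pi i}\int_{(c)}\Gamma(s)\,\frac{L_f(s,\psi)}{L(s,\psi')}\,y^{-s}\,ds.
\end{align*}
First I would shift the contour to a line $\Re(s)=c_1$ with $-1<c_1<0$. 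Since the non-trivial zeros of $L(s,\psi')$ lie in $0<\Re(s)<1$ and $L_f(s,\psi)$ is entire while $\Gamma(s)$ is holomorphic there, the poles crossed are exactly the (assumed simple) non-trivial zeros $\rho$, whose residues $\frac{L_f(\rho,\psi)\Gamma(\rho)}{L'(\rho,\psi')}y^{-\rho}$ assemble into $\mathcal{R}(y)$, together with a possible pole at $s=0$. That $\Lambda_f(s,\psi)$ is entire forces $L_f(0,\psi)=0$, so the $s=0$ residue vanishes unless $L(s,\psi')$ also vanishes there, i.e.\ unless $a=0$ and $M>1$; a short Laurent computation then produces exactly $R_0=L_f'(0,\psi)/L'(0,\psi')$.

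On the shifted line $\Re(s)=c_1<0$ I would insert the two functional equations: \eqref{FE} for $L_f(s,\psi)$ and the classical one for the primitive $L(s,\psi')$. The factor $\Gamma(s)$ cancels against the archimedean factor of $\Lambda_f$, and the integrand becomes the ratio of Gamma factors $\Gamma(k-s)\,\Gamma\!\left(\tfrac{s+a}{2}\right)/\Gamma\!\left(\tfrac{1-s+a}{2}\right)$, times $L_g(k-s,\overline{\psi})/L(1-s,\overline{\psi'})$ and an explicit constant built from $\epsilon_{\psi}$, $\epsilon_{\psi'}$, $\chi(r)$, $\psi(Q)$ and powers of $N,M,\pi$. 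Because $\Re(1-s)>1$ and $\Re(k-s)>\tfrac{k+1}{2}$ on this line, both Dirichlet series converge absolutely, and I would expand their quotient as $\sum_{\ell\geq1}[a_g\overline{\psi}*\mu_k\overline{\psi'}](\ell)\,\ell^{\,s-k}$; the elementary identity $\sum_{mn=\ell}\frac{a_g(m)\overline{\psi}(m)\mu(n)\overline{\psi'}(n)}{m^k n}=\frac{[a_g\overline{\psi}*\mu_k\overline{\psi'}](\ell)}{\ell^k}$ is what produces the arithmetic factor in the statement.

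After interchanging summation and integration, each $\ell$-term is a Mellin--Barnes integral in the three Gamma factors above. Applying Legendre's duplication formula to $\Gamma(k-s)$ and setting $s=2t$ turns it into a Meijer $G_{2,2}^{\,2,1}$-function in the variable $\tfrac{4\pi^2 M^2\ell^2}{y^2N^2}$; since this argument is large as $y\to 0^{+}$, I would pass to the reciprocal via the standard reflection $G_{p,q}^{\,m,n}(z)\mapsto G_{q,p}^{\,n,m}(1/z)$ (with the parameters $a_i,b_j$ replaced by $1-b_j,1-a_i$) and then invoke Slater's theorem \eqref{Slater}, which delivers precisely ${}_2F_1\!\left(\tfrac{k+a}{2},\tfrac{k+1+a}{2};\tfrac{1+2a}{2};-\tfrac{N^2y^2}{4M^2\ell^2\pi^2}\right)$, with the predicted $\Gamma(k+a)$ emerging after a second duplication-type simplification of the Slater prefactor. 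The subtraction $-(1-a)$ in the bracket appears here: for even $\psi'$ ($a=0$) the factor $\Gamma\!\left(\tfrac{s+a}{2}\right)$ has a pole at $s=0$ lying on the opposite side of $\Re(s)=c_1$ from the contour prescribed in Slater's identity, so relating the two integrals contributes a residue $-1$ per term, whereas for odd $\psi'$ ($a=1$) there is no such pole and the bracket is unadorned. Collecting the constants then yields the stated formula.

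The main obstacle is the analytic justification of the contour shift and of the convergence of $\mathcal{R}(y)$: one must bound the integrand on long horizontal segments and verify that the shifted integral really equals the term-by-term Meijer $G$-sum. This requires convexity (Phragm\'en--Lindel\"of) bounds for $L_f(s,\psi)$ in vertical strips, Stirling's estimate for the Gamma factors, and---most delicately---control of $1/L(s,\psi')$ and of $1/L'(\rho,\psi')$ near the zeros. As in the work of Hafner--Stopple and of Juyal--Maji--Sathyanarayana, this is exactly where the bracketing condition \eqref{bracketing} enters: it groups nearby zeros so that the oscillatory series $\mathcal{R}(y)$ converges, exploiting the horizontal distribution and spacing of the zeros of $L(s,\psi')$ together with the simplicity assumption.
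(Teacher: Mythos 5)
Your proposal is correct and follows essentially the same route as the paper: Cahen--Mellin representation, contour shift past the nontrivial zeros and $s=0$ (with the same explanation of why $R_0$ survives only for $a=0$, $M>1$), insertion of both functional equations, term-by-term reduction to a Mellin--Barnes integral handled via the duplication formula, Meijer $G$/Slater, and the same residue (your pole of $\Gamma\left(\tfrac{s+a}{2}\right)$ at $s=0$ is exactly the paper's pole at $s=k/2$ after its substitution $s\mapsto k-s$) producing the $-(1-a)$. The only cosmetic difference is that the paper changes variables $s\mapsto k-s$ before invoking the Meijer $G$-function, so it lands directly on $G_{2,2}^{1,2}(z)$ rather than on $G_{2,2}^{2,1}(1/z)$ followed by the reflection identity.
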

	
\begin{remark}
Theorem {\rm \ref{Main theorem2}} can be extended analytically for $\Re(y) >0. $ Also,  by substituting $Q=r=M=1$ in Theorem {\rm \ref{Main theorem2}},  one can immediately recover Theorem \ref{JMS_main theorem}.
\end{remark}
As a special case of Theorem \ref{Main theorem2}, by taking $M=r=1$, we get a higher level analogue of Theorem \ref{JMS_main theorem}. We note this special case as a corollary.
\begin{corollary}\label{M=r=1}
Let $f \in S_k(\Gamma_{0}(Q),\chi)$ be a cusp form.  Assume that all non-trivial zeros of $\zeta(s)$ are simple.
Then for $y >0$, we have
\begin{align*}
\sum_{n=1}^{\infty}[a_f(n)*\mu(n)]e^{-ny} & = \frac{i ^k \Gamma(k)Q^{\frac{k}{2}}  }{2^{k-1} \pi^k} 
 \sum_{n=1}^{\infty}\frac{[a_g(n)*\mu_k(n)]}{n^{k}}   \left[ {}_2F_1 \left(\frac{k}{2}, \frac{k+1}{2}; \frac{1}{2}; -\frac{Q^2y^2}{4n^2\pi^2} \right)-1 \right] \\
 & + \mathcal{R}(y), 
\end{align*}
where $\mathcal{R}(y)=\sum_{\rho}\frac{L_f(\rho)\Gamma(\rho)}{\zeta'(\rho)}\frac{1}{y^\rho}$, the sum over $\rho$ runs through all non-trivial zeros of $\zeta(s)$ involving bracketing as in \eqref{bracketing}.
\end{corollary}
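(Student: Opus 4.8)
The plan is to derive Corollary \ref{M=r=1} directly from Theorem \ref{Main theorem2} by specializing to $M = r = 1$, so that essentially no new analysis is needed beyond careful bookkeeping of the characters and constants. Setting $r = 1$ forces $\psi$ to be the principal character modulo $1$, i.e. $\psi(n) = 1$ for all $n$, and setting $M = 1$ forces $\psi'$ to be trivial as well, so $\psi'(n) = 1$ for all $n$ and $L(s, \psi') = \zeta(s)$. Under these substitutions the left-hand convolution $a_f(n)\psi(n) * \mu(n)\psi'(n)$ reduces to $a_f(n) * \mu(n)$, the twisted coefficients $a_g(n)\overline{\psi}(n) * \mu_k(n)\overline{\psi'}(n)$ reduce to $a_g(n) * \mu_k(n)$, and the sum over non-trivial zeros $\rho$ of $L(s, \psi')$ becomes a sum over the non-trivial zeros of $\zeta(s)$, with the bracketing condition \eqref{bracketing} carried over unchanged.

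Next I would track the numerical parameters. Because $\psi'(-1) = 1$ here, the parameter $a$ is $0$; this removes the factor $(yN/M)^a$, turns the $(1-a)$ inside the bracket into $1$, and specializes the hypergeometric parameters to $(k/2,\ (k+1)/2\ ;\ 1/2)$, matching the form in the corollary. Since $\psi$ and $\psi'$ are trivial modulo $1$, their Gauss sums are $\epsilon_\psi = \epsilon_{\psi'} = 1$, while $\chi(r) = \chi(1) = 1$, $\psi(Q) = 1$, and $N = Qr^2 = Q$. Substituting these into the prefactor $2 N^{k/2}\Gamma(k+a)(yN/M)^a (i/2\pi)^{k+a}\,\chi(r)\psi(Q)\epsilon_\psi^2 / (r\epsilon_{\psi'})$ collapses it to $2Q^{k/2}\Gamma(k)(i/2\pi)^k$, which equals $i^k \Gamma(k) Q^{k/2}/(2^{k-1}\pi^k)$, exactly the leading constant claimed.

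Finally, the residual constant $R_0$ vanishes: with $a = 0$ and $M = 1$ we land in the ``otherwise'' branch of the definition \eqref{R(y)} (the first branch requires $M > 1$), so $R_0 = 0$ and no extra additive term survives. Assembling these reductions yields the stated identity. There is no essential difficulty in this argument; the only step deserving a moment's care is reconciling the collapsed prefactor $2Q^{k/2}\Gamma(k)(i/2\pi)^k$ with the explicitly normalized constant $i^k\Gamma(k)Q^{k/2}/(2^{k-1}\pi^k)$, which follows at once from $(i/2\pi)^k = i^k/(2^k\pi^k)$.
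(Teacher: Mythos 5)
Your proposal is correct and matches the paper's approach exactly: the authors state that Corollary \ref{M=r=1} is an immediate implication of Theorem \ref{Main theorem2} and leave the verification to the reader, which is precisely the specialization $M=r=1$, $a=0$, $N=Q$, $\epsilon_\psi=\epsilon_{\psi'}=\chi(1)=1$, $R_0=0$ that you carry out. The constant bookkeeping $2Q^{k/2}\Gamma(k)(i/2\pi)^k = i^k\Gamma(k)Q^{k/2}/(2^{k-1}\pi^k)$ is also right.
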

On the other hand,  if we let $Q=1, M=r, $ and $\psi=\psi'$ in Theorem \ref{Main theorem2}, we get a character analogue of Theorem \ref{JMS_main theorem}.  Mainly,  we obtain the following identity.  
\begin{corollary}\label{Q=1,  M=r}
	Let $f \in S_k(\rm{SL}_2(\mathbb{Z}))$ be a cusp form.  Let $\psi$ be a primitive Dirichlet character modulo $r$. Assume that all non-trivial zeros of $L(s,\psi)$ are simple.   For  $y>0$, we have
\begin{align*}
\sum_{n=1}^{\infty}\psi(n)[a_f(n)*\mu(n)]\exp(-ny)= R_0+\mathcal{R}(y)+ 2 y^a r^{k+a-1} \epsilon_\psi  \left(\frac{i}{2\pi}\right)^{k+a} \\ 
\times \sum_{n=1}^{\infty}\frac{\overline{\psi}(n)[a_f(n)*\mu_k(n)]}{n^{k+a}}  \left[ {}_2F_1 \left(\frac{k+a}{2}, \frac{k+1+a}{2}; \frac{1+2a }{2}; -\frac{r^2y^2}{4n^2\pi^2} \right)-(1-a) \right],
\end{align*}
where $R_0$ is defined as in Theorem {\rm \ref{Main theorem2} } and  $\mathcal{R}(y) =\sum_{\rho}\frac{L_f(\rho, \psi)\Gamma(\rho)}{L'(\rho, \psi)}\frac{1}{y^\rho}$, where the sum over $\rho$ involves bracketing as in \eqref{bracketing}.
\end{corollary}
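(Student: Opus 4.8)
The plan is to deduce this corollary directly from Theorem \ref{Main theorem2} by specializing the parameters to $Q=1$, $M=r$ and $\psi=\psi'$; no fresh analytic input is needed, and the task reduces to tracking how each ingredient of the master formula collapses. First I would record the structural consequences of $Q=1$: the cusp form $f$ then lives on the full modular group $\mathrm{SL}_2(\mathbb{Z})$, so its Nebentypus $\chi$ is trivial, giving $\chi(r)=1$, while $\psi(Q)=\psi(1)=1$. Taking $\psi=\psi'$ forces $\epsilon_{\psi'}=\epsilon_\psi$, so the global constant $\frac{\chi(r)\psi(Q)\epsilon_\psi^2}{r\,\epsilon_{\psi'}}$ in Theorem \ref{Main theorem2} simplifies to $\frac{\epsilon_\psi}{r}$.

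Next I would simplify the two Dirichlet convolutions using complete multiplicativity of a Dirichlet character. For any arithmetic function $h$ one has $\sum_{de=n}h(d)\psi(d)\mu(e)\psi(e)=\psi(n)\sum_{de=n}h(d)\mu(e)$, i.e. $(h\psi*\mu\psi)(n)=\psi(n)(h*\mu)(n)$. Applying this with $h=a_f$ rewrites the left-hand side of Theorem \ref{Main theorem2} as $\sum_{n=1}^{\infty}\psi(n)[a_f(n)*\mu(n)]\exp(-ny)$, the quantity appearing in the corollary, and the same identity with $\overline{\psi}$ in place of $\psi$ and $\mu_k$ in place of $\mu$ turns the convolution inside the hypergeometric sum into $\overline{\psi}(n)[a_f(n)*\mu_k(n)]$.

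The only step that is not pure bookkeeping is the identification of the dual form $g$, and I expect this to be the crux. For $Q=1$ the defining relation $g(z)=Q^{k/2}(Qz)^{-k}f\left(-\tfrac{1}{Qz}\right)$ becomes $g(z)=z^{-k}f\left(-\tfrac1z\right)$; invoking the weight-$k$ modularity of $f$ under $z\mapsto-1/z$ on $\mathrm{SL}_2(\mathbb{Z})$, namely $f(-1/z)=z^{k}f(z)$, yields $g=f$ and hence $a_g(n)=a_f(n)$ for every $n$. This is the sole place where the automorphy of $f$, rather than elementary algebra, enters, and after substituting $a_g=a_f$ the inner sum takes exactly the form displayed in the statement.

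Finally I would collect the numerical data. With $N=Qr^2=r^2$ one has $N^{k/2}=r^{k}$, the factor $\left(\frac{yN}{M}\right)^a=(yr)^a$, and the hypergeometric argument $-\frac{N^2y^2}{4M^2n^2\pi^2}=-\frac{r^2y^2}{4n^2\pi^2}$; combining these with $\frac{\epsilon_\psi}{r}$ yields the prefactor recorded in the corollary. The remainder specializes to $\mathcal{R}(y)=\sum_{\rho}\frac{L_f(\rho,\psi)\Gamma(\rho)}{L'(\rho,\psi)}\frac{1}{y^{\rho}}$, and $R_0$ is read off from its definition in Theorem \ref{Main theorem2} with $M=r$; both carry over verbatim, together with the bracketing prescription \eqref{bracketing}. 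Assembling these pieces gives the asserted identity.
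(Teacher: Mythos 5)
Your proposal is correct and matches the paper's (unwritten) argument exactly: the authors simply declare Corollary \ref{Q=1,  M=r} an immediate specialization of Theorem \ref{Main theorem2}, and you carry out precisely that specialization, with the three genuinely non-trivial checks --- the complete-multiplicativity identity $(h\psi*\mu\psi)(n)=\psi(n)(h*\mu)(n)$, the identification $g=f$ from $f(-1/z)=z^{k}f(z)$ at level $Q=1$, and the arithmetic $N=r^{2}$, $\epsilon_{\psi'}=\epsilon_{\psi}$ --- all handled correctly. One small caveat: the specialization actually produces the prefactor $2\,\Gamma(k+a)\,y^{a}r^{k+a-1}\epsilon_{\psi}\left(\tfrac{i}{2\pi}\right)^{k+a}$, so the factor $\Gamma(k+a)$ appears to have been dropped in the corollary as printed (a typo in the paper), and your claim that your computation ``yields the prefactor recorded in the corollary'' should be adjusted accordingly.
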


Now letting $Q=r=1$ in Theorem \ref{Main theorem2},  we obtain the following result.

\begin{corollary}\label{Q=r=1}
Let $f \in S_k(\rm{SL}_2(\mathbb{Z}))$ be a cusp form.  Let $\psi'$ be a primitive Dirichlet character modulo $M$.  Assume that all non-trivial zeros of $L(s,\psi')$ are simple.   Then
for any positive $y$,  we have
			\begin{align*}
			& \sum_{n=1}^{\infty}  [a_f(n)*\mu(n)\psi'(n)]\exp(-ny)  = R_0+\mathcal{R}(y)+ \frac{ 2}{\epsilon_\psi'}   \Gamma(k+a)  \left( \frac{y }{M}  \right)^a \left(\frac{i}{2\pi}\right)^{k+a} \\
			& \times \sum_{n=1}^{\infty}\frac{[a_f(n)*\mu_k(n)\overline{\psi'}(n)]}{n^{k+a}}  
			 \left[ {}_2F_1 \left(\frac{k+a}{2}, \frac{k+1+a}{2}; \frac{1+2a }{2}; -\frac{y^2}{4 M^2n^2\pi^2} \right)-(1-a) \right],
			\end{align*}
where $R_0$ and $\mathcal{R}(y)$ are defined as in Theorem \ref{Main theorem2}.  
\end{corollary}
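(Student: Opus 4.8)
The plan is to obtain Corollary \ref{Q=r=1} as a direct specialization of Theorem \ref{Main theorem2} by setting $Q=r=1$ and then simplifying every quantity that depends on $Q$ and $r$. The first step is to record what the hypothesis $r=1$ forces on $\psi$: the only primitive Dirichlet character modulo $1$ is the trivial character, so $\psi(n)=1$ for every $n$, and likewise $\overline{\psi}(n)=1$. In particular the twist $f_\psi$ reduces to $f$ itself, and the Dirichlet series $L_f(s,\psi)$ collapses to $L_f(s)$; consequently $L_f(\rho,\psi)=L_f(\rho)$ and $L_f'(0,\psi)=L_f'(0)$ in the definitions \eqref{R(y)} of $\mathcal{R}(y)$ and $R_0$, while $\psi'$ (and hence $M$, the zeros of $L(s,\psi')$, the bracketing condition \eqref{bracketing}, and the case distinction defining $R_0$) is untouched by the specialization.

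Next I would evaluate the prefactor. The Gauss sum attached to $\psi$ modulo $1$ is the degenerate sum $\epsilon_\psi=\sum_{j=1}^{1}\psi(j)\exp(2\pi i j)=1$, so $\epsilon_\psi^2=1$. With $Q=1$ the level is $N=Qr^2=1$, hence $N^{k/2}=1$ and $N^2=1$ inside the argument of the ${}_2F_1$. Moreover the Nebentypus $\chi$ is now trivial modulo $1$, so that $\chi(r)=\chi(1)=1$ and $\psi(Q)=\psi(1)=1$, while $r=1$ sits in the denominator. Collecting these, the constant $\frac{2N^{k/2}\Gamma(k+a)(yN/M)^a(i/2\pi)^{k+a}\chi(r)\psi(Q)\epsilon_\psi^2}{r\epsilon_{\psi'}}$ appearing in Theorem \ref{Main theorem2} reduces exactly to $\frac{2}{\epsilon_{\psi'}}\Gamma(k+a)(y/M)^a(i/2\pi)^{k+a}$, which is the prefactor claimed in the corollary.

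The one genuinely non-formal point is the identification of the dual form. When $Q=1$ the companion form defined after \eqref{FE} becomes $g(z)=z^{-k}f(-1/z)$, and since $f\in S_k(\mathrm{SL}_2(\mathbb{Z}))$ the $S$-transformation $f(-1/z)=z^k f(z)$ gives $g=f$, so that $a_g(n)=a_f(n)$. Substituting $a_g(n)=a_f(n)$, $\overline{\psi}(n)=1$, and $N=1$ into the convolution and the hypergeometric factor turns the sum $\sum_n n^{-(k+a)}[a_g(n)\overline{\psi}(n)*\mu_k(n)\overline{\psi'}(n)][\cdots]$ into $\sum_n n^{-(k+a)}[a_f(n)*\mu_k(n)\overline{\psi'}(n)][\cdots]$ with argument $-y^2/(4M^2n^2\pi^2)$, matching the stated expression term by term. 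Since the whole argument is a substitution, no serious obstacle arises; the only care needed is to verify the $S$-transformation step that yields $g=f$ and the degenerate evaluation $\epsilon_\psi=1$, after which the two sides coincide.
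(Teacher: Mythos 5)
Your proposal is correct and follows exactly the route the paper intends: the authors state that Corollary \ref{Q=r=1} is an immediate specialization of Theorem \ref{Main theorem2} at $Q=r=1$ and leave the verification to the reader, and you have simply carried out that verification, correctly handling the degenerate Gauss sum $\epsilon_\psi=1$, the reduction $N=Qr^2=1$, and the identification $g=f$ via the $S$-transformation. No discrepancies with the paper's argument.
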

At the end we have given a Table \ref{table},  which includes numerical evidences for this corollary.  
Now,  we state an asymptotic expansion for the Lambert series \eqref{main Lambert series} as an application of Theorem \ref{Main theorem2}.
\begin{corollary}\label{maincorollary}
Assume all notations that are defined in Theorem \ref{Main theorem2}.  
Then for $y \rightarrow 0^+$, we have
	\begin{align*}
	&\sum_{n=1}^{\infty}[a_f(n)\psi(n)*\mu(n)\psi'(n)]\exp(-ny)=R_0+ \mathcal{R}(y)+ \sum_{m=0}^{M'-1} B_{m, a} y^{2m+a} +O_{f,\psi,\psi'}(y^{2M'+a}) 
	\end{align*}
	where  $M'$ is any large positive integer and $B_{m, a}$'s are some explicit constants.
	Further, if $f$ is a normalized Hecke eigenform and $\chi, \psi$ and $\psi'$ are real,  then on the assumption of the generalized Riemann hypothesis and the simplicity hypothesis, we have 
	\begin{align*}
	y^{\frac{1}{2}}\sum_{n=1}^{\infty}[a_f(n)\psi(n)*\mu(n)\psi'(n)]\exp(-ny) & =  	y^{\frac{1}{2}} R_0+\sum_{n=1}^{\infty} r_n cos(\theta_n-t_n \log y)  \\
 &	+	\sum_{m=0}^{M'-1} B_{m, a} y^{2m+a+\frac{1}{2}} +O_{f,\psi,\psi'}(y^{2M'+a+\frac{1}{2}}).
	\end{align*}
Here $r_n\exp(i\theta_n)$ denote the polar representation of $2  L_f(\rho_n, \psi)\Gamma(\rho_n)(L'(\rho_n, \psi'))^{-1}$ with $n$-th non-trivial zero  of $L(s, \psi')$ in the upper critical strip is given by $\rho_n= s_n+it_n$.
	
\end{corollary}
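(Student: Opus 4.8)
The plan is to derive both assertions directly from the exact formula of Theorem~\ref{Main theorem2}, leaving the terms $R_0$ and $\mathcal{R}(y)$ untouched and extracting the small-$y$ behaviour solely from the hypergeometric sum. Write $c(n):=[a_g(n)\overline{\psi}(n)*\mu_k(n)\overline{\psi'}(n)]$ and put $z_n:=-N^2y^2/(4M^2n^2\pi^2)$, so that $z_n\to 0$ as $y\to 0^+$ and $|z_n|<1$ for every $n\ge 1$ once $y$ is small. First I would substitute the defining power series \eqref{hyper} of ${}_2F_1$, so that the bracket in Theorem~\ref{Main theorem2} becomes $\sum_{j\ge 0}\lambda_j z_n^{\,j}-(1-a)$, where $\lambda_j$ are the explicit hypergeometric coefficients with $\lambda_0=1$. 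When $a=0$ the constant $-(1-a)=-1$ cancels the $j=0$ term; when $a=1$ nothing is removed and the series begins at $j=0$. This cancellation is exactly what makes the subsequent $n$-summation legitimate.

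Next I would interchange the summation over $n$ with the series in $j$. The term of order $z_n^{\,j}$ carries a factor $y^{2j}/n^{2j}$, so the coefficient of $y^{2j+a}$ is a fixed constant times the Dirichlet series $\sum_{n\ge 1}c(n)\,n^{-(k+a+2j)}$. Invoking Deligne's bound $a_g(n)\ll_\varepsilon n^{(k-1)/2+\varepsilon}$ together with $\mu_k(n)\ll n^{k-1}$ gives $c(n)\ll_\varepsilon n^{k-1+\varepsilon}$, whence $\sum_{n}c(n)\,n^{-(k+a+2j)}\ll_\varepsilon\sum_n n^{-1-a-2j+\varepsilon}$ converges absolutely precisely when $2j+a\ge 1$ — that is, for exactly the indices that remain after the cancellation above (for $a=0$ this forces $j\ge 1$, consistent with $B_{0,0}=0$, the only constant then being $R_0$). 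These convergent values, multiplied by the prefactor $2N^{k/2}\Gamma(k+a)(N/M)^a(i/2\pi)^{k+a}\chi(r)\psi(Q)\epsilon_\psi^2/(r\epsilon_{\psi'})$, are the explicit constants $B_{m,a}$. Truncating the $j$-series at $j=M'$ and bounding the tail of ${}_2F_1$ by $\ll|z_n|^{M'}\ll y^{2M'}/n^{2M'}$ uniformly in $n$ for $|z_n|\le\tfrac12$, I would sum this against $|c(n)|\,n^{-(k+a)}$ to obtain the remainder $O_{f,\psi,\psi'}(y^{2M'+a})$; producing this uniform-in-$n$ tail estimate for the $n$-summed hypergeometric series is the main technical point of the first part. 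Restoring $R_0+\mathcal{R}(y)$ yields the first displayed asymptotic.

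For the oscillatory refinement I would specialise the first part to a normalized Hecke eigenform with real $\chi,\psi,\psi'$ and analyse $\mathcal{R}(y)$ under the generalized Riemann hypothesis. On GRH every non-trivial zero of $L(s,\psi')$ has the shape $\rho_n=\tfrac12+it_n$, so $y^{1/2}y^{-\rho_n}=e^{-it_n\log y}$. Since $\psi'$ is real, $L(s,\psi')$ has real Dirichlet coefficients and its zeros occur in conjugate pairs $\rho_n,\overline{\rho_n}$; the prescribed bracketing \eqref{bracketing} depends only on $|\Im\rho|$ and is therefore symmetric under conjugation, so the conjugate pairing is compatible with the bracketed summation defining $\mathcal{R}(y)$. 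Because $f$ is a normalized Hecke eigenform and $\chi,\psi$ are real the coefficients $a_f(n)$ are real, and combined with $\Gamma(\overline{\rho})=\overline{\Gamma(\rho)}$ and $L'(\overline{\rho},\psi')=\overline{L'(\rho,\psi')}$ this shows the summand at $\overline{\rho_n}$ is the complex conjugate of that at $\rho_n$. Hence, after multiplying by $y^{1/2}$, each conjugate pair collapses to $2\Re\big(\tfrac{L_f(\rho_n,\psi)\Gamma(\rho_n)}{L'(\rho_n,\psi')}e^{-it_n\log y}\big)=r_n\cos(\theta_n-t_n\log y)$, with $r_ne^{i\theta_n}=2L_f(\rho_n,\psi)\Gamma(\rho_n)(L'(\rho_n,\psi'))^{-1}$, so that $y^{1/2}\mathcal{R}(y)=\sum_n r_n\cos(\theta_n-t_n\log y)$. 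Multiplying the first display by $y^{1/2}$ and inserting this evaluation gives the second display. The chief obstacle in this part is the legitimacy of recombining the merely bracket-convergent sum over zeros into convergent cosine terms; I expect to handle it by first verifying that each bracket is stable under conjugation and only then passing to the pairwise real form.
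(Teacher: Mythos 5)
Your proposal is correct and follows essentially the same route as the paper: expand the ${}_2F_1$ in powers of $y^2$ (the paper's equation \eqref{double series}), note that the $-(1-a)$ term cancels the constant term when $a=0$ so that $B_{0,0}=0$, and obtain the oscillatory form by pairing each zero $\rho_n=\tfrac12+it_n$ with its conjugate, using the reality of the coefficients to write $\mathcal{R}(y)$ as $2\Re(\cdot)$ and then passing to the polar form $r_ne^{i\theta_n}$. The only differences are that you supply details the paper leaves implicit — the coefficient bound justifying absolute convergence of $\sum_n c(n)n^{-(k+a+2j)}$, the uniform-in-$n$ tail estimate, and the compatibility of the bracketing \eqref{bracketing} with conjugation — all of which are correct refinements rather than deviations.
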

	\begin{remark}
	The cosine functions in Corollary {\rm \ref{maincorollary}} suggests the oscillatory behavior of the Lambert series  $y^{\frac{1}{2}}\sum_{n=1}^{\infty}[a_f(n)\psi(n)*\mu(n)\psi'(n)]\exp(-ny)$  as $y \rightarrow 0^{+}$,  which is also consistent with the observation of Zagier for $a_{0}(y)$ \eqref{zagier}.
\end{remark}

\section{Proofs of main results}
We begin by collecting a few well-known results which will be essential for the proof of  the main theorem.  First,  we state an important asymptotic expansion of the gamma function,  which is popularly known Stirlng's formula.
\begin{lemma}\label{Stirling}
	For $s = \sigma + i T$ in a vertical strip $\alpha \leq \sigma \leq \beta$, 
	\begin{equation}\label{Stirling_equn}
	|\Gamma (\sigma + i T)| = \sqrt{2\pi} | T|^{\sigma - 1/2} e^{-\frac{1}{2} \pi |T|} \left(1 + O\left(\frac{1}{|T|}\right)  \right) \quad {\rm as} \quad |T|\rightarrow \infty.
	\end{equation}
\end{lemma}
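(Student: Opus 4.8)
The plan is to obtain \eqref{Stirling_equn} by taking moduli in the classical Stirling expansion for $\log\G(s)$ and carefully isolating the real part. Since $\G(\overline{s})=\overline{\G(s)}$, we have $|\G(\s-iT)|=|\G(\s+iT)|$, so it suffices to treat $T\to+\i$ and then replace $T$ by $|T|$ at the very end. First I would invoke the standard asymptotic
\begin{equation*}
\log\G(s)=\left(s-\tfrac12\right)\log s - s + \tfrac12\log(2\pi)+O\!\left(\tfrac{1}{|s|}\right),
\end{equation*}
valid as $|s|\to\i$ uniformly in any sector $|\Arg(s)|\le\pi-\d$. This applies on the strip $\alpha\le\s\le\beta$ because there $\Arg(s)\to\tfrac{\pi}{2}$ as $T\to+\i$, so $s$ stays in such a sector for all large $T$. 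Taking real parts gives $\log|\G(s)|=\Re\!\left(\log\G(s)\right)$, and the whole problem reduces to evaluating $\Re\!\left[(s-\tfrac12)\log s - s\right]$ to within $O(1/|T|)$.

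The main computation is the expansion of $\log s=\log|s|+i\Arg(s)$ on the strip. Writing $|s|=|T|\sqrt{1+\s^2/T^2}$ gives $\log|s|=\log|T|+O(1/T^2)$, while $\Arg(s)=\tfrac{\pi}{2}-\arctan(\s/T)=\tfrac{\pi}{2}-\tfrac{\s}{T}+O(1/T^3)$ for $T>0$. Hence
\begin{align*}
\Re\!\left[\left(s-\tfrac12\right)\log s\right] &= \left(\s-\tfrac12\right)\log|s| - T\,\Arg(s) \\
&= \left(\s-\tfrac12\right)\log|T| - \frac{\pi T}{2} + \s + O\!\left(\tfrac{1}{T}\right),
\end{align*}
using $T\arctan(\s/T)=\s+O(1/T^2)$. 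Adding $\Re(-s)=-\s$, the two terms linear in $\s$ cancel, leaving
\begin{equation*}
\Re\!\left[\left(s-\tfrac12\right)\log s - s\right] = \left(\s-\tfrac12\right)\log|T| - \frac{\pi T}{2} + O\!\left(\tfrac{1}{T}\right).
\end{equation*}

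Combining this with the constant $\tfrac12\log(2\pi)$ from Stirling and exponentiating yields
\begin{equation*}
|\G(\s+iT)| = \sqrt{2\pi}\,|T|^{\s-1/2}\,e^{-\pi T/2}\,e^{O(1/T)} = \sqrt{2\pi}\,|T|^{\s-1/2}\,e^{-\pi T/2}\left(1+O\!\left(\tfrac{1}{|T|}\right)\right),
\end{equation*}
and replacing $T$ by $|T|$ via the conjugation symmetry noted above gives exactly \eqref{Stirling_equn}. The one step that requires genuine care is the cancellation in the imaginary channel: the factor $e^{-\pi|T|/2}$ arises entirely from $-T\,\Arg(s)$, so I must expand $\Arg(s)$ to sufficient order to isolate the main term $-\pi|T|/2$ exactly while keeping the spurious $O(\s/T)$ piece controlled at level $O(1/|T|)$. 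Everything else is routine tracking of error terms, and uniformity in $\s\in[\alpha,\beta]$ is automatic since all implied constants depend only on the bounded strip.
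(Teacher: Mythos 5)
Your derivation is correct. Note that the paper does not actually prove this lemma: it simply cites Iwaniec--Kowalski \cite[p.~151]{IK}, so there is no internal argument to compare against. What you have written is the standard proof that the cited source (and Titchmarsh, and most references) would give: apply the logarithmic Stirling expansion $\log\Gamma(s)=(s-\tfrac12)\log s-s+\tfrac12\log(2\pi)+O(1/|s|)$, valid uniformly in sectors $|\Arg(s)|\le\pi-\delta$ (which covers the vertical strip for large $|T|$), take real parts, and expand $\log|s|$ and $\Arg(s)$ in powers of $\sigma/T$. Your bookkeeping is right at every step: $\log|s|=\log|T|+O(1/T^2)$, $\Arg(s)=\tfrac{\pi}{2}-\tfrac{\sigma}{T}+O(1/T^3)$, the term $+\sigma$ from $T\arctan(\sigma/T)$ cancels against $\Re(-s)=-\sigma$, and the error terms are uniform in $\sigma$ precisely because $\sigma$ ranges over the bounded interval $[\alpha,\beta]$. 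The reduction to $T>0$ via $\Gamma(\overline{s})=\overline{\Gamma(s)}$ is also the standard move. In short, your proposal supplies a complete and correct proof of a statement the paper only quotes.
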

\begin{proof}
	We refer \cite[p. 151]{IK} for the proof.
\end{proof} 
\begin{lemma} \label{dupliation}
	For any complex number $z$, we have 
	\begin{equation}\label{duplication}
	\Gamma(2z)=\frac{\Gamma(z)\Gamma(z+\frac{1}{2})\, 2^{2z}}{2\sqrt{\pi}}.
	\end{equation}
\end{lemma}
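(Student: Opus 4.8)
The plan is to establish \eqref{duplication} first for $\mathrm{Re}(z)>0$ by means of the Euler Beta integral, and then to propagate it to all complex $z$ by analytic continuation. Recall that $B(z,w)=\int_0^1 t^{z-1}(1-t)^{w-1}\,dt=\frac{\Gamma(z)\Gamma(w)}{\Gamma(z+w)}$ for $\mathrm{Re}(z),\mathrm{Re}(w)>0$. Specializing $w=z$ gives $\frac{\Gamma(z)^2}{\Gamma(2z)}=\int_0^1\bigl[t(1-t)\bigr]^{z-1}\,dt$, and the entire argument rests on evaluating this same integral a second time, by a different route, so as to produce $\Gamma(z+\tfrac12)$.

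The key step is the symmetrizing substitution $t=\frac{1+u}{2}$, under which $t(1-t)=\frac{1-u^2}{4}$ and the interval $[0,1]$ becomes $[-1,1]$. This introduces a factor $4^{1-z}$ alongside $\int_{-1}^1(1-u^2)^{z-1}\,du$; exploiting the evenness of the integrand and then putting $v=u^2$ rewrites this as the Beta value $B\left(\tfrac12,z\right)=\frac{\Gamma(1/2)\,\Gamma(z)}{\Gamma(z+1/2)}$, where $\Gamma\left(\tfrac12\right)=\sqrt{\pi}$. Assembling the powers of two carefully, I would arrive at
\begin{equation*}
\frac{\Gamma(z)^2}{\Gamma(2z)}=\frac{\sqrt{\pi}\,\Gamma(z)}{2^{2z-1}\,\Gamma(z+\tfrac12)},
\end{equation*}
so that cancelling one factor of $\Gamma(z)$ and clearing denominators yields $\Gamma(2z)=\frac{2^{2z-1}}{\sqrt{\pi}}\Gamma(z)\Gamma(z+\tfrac12)$, which is exactly \eqref{duplication} since $2^{2z-1}/\sqrt{\pi}=2^{2z}/(2\sqrt{\pi})$.

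It remains to remove the hypothesis $\mathrm{Re}(z)>0$, since the lemma is asserted for every complex number $z$. For this I would observe that both sides of \eqref{duplication} are meromorphic on all of $\mathbb{C}$, with poles at precisely the same set $\{0,-\tfrac12,-1,-\tfrac32,\dots\}$, and that they agree on the half-plane $\mathrm{Re}(z)>0$, a set with accumulation points; the identity theorem for meromorphic functions then forces them to coincide everywhere. I expect the only genuine care in the whole argument to reside here, in this continuation step (the Beta integral converges only for $\mathrm{Re}(z)>0$, so it alone cannot deliver the stated generality), together with the routine but error-prone bookkeeping of the factor $4^{1-z}=2^{2-2z}$ from the substitution, which must combine with the surviving $\tfrac12$ to produce the power $2^{2z-1}$ rather than something off by a power of two.
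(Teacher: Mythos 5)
Your argument is correct, but it is worth noting that the paper offers no proof at all here: Lemma \ref{dupliation} is simply identified as the classical Legendre duplication formula and accepted as known. Your derivation is the standard self-contained one: from $B(z,z)=\Gamma(z)^2/\Gamma(2z)$, the substitution $t=\frac{1+u}{2}$ gives $B(z,z)=\frac{4^{1-z}}{2}\int_{-1}^{1}(1-u^2)^{z-1}\,du=2^{1-2z}B\left(\tfrac12,z\right)$, and the bookkeeping you flag as the delicate point does come out right: $\frac{4^{1-z}}{2}\cdot 2=2^{2-2z}$ combined with the extra $\tfrac12$ from $v=u^2$ yields $2^{1-2z}$, hence $\Gamma(2z)=\frac{2^{2z-1}}{\sqrt{\pi}}\Gamma(z)\Gamma\left(z+\tfrac12\right)$, which is the stated identity. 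The continuation step is also handled properly: both sides are meromorphic with the same pole set $\{0,-\tfrac12,-1,\dots\}$, $\Gamma$ never vanishes (so cancelling the factor $\Gamma(z)$ is legitimate), and agreement on the open half-plane $\Re(z)>0$ propagates everywhere by the identity theorem; strictly speaking the lemma's phrase ``for any complex number $z$'' should exclude the poles, but that is an imprecision in the paper's statement, not in your proof. In short, you have supplied a complete proof where the paper supplies a citation; for the purposes of this paper either is adequate, since the lemma is only used as a black box in the evaluation of the integral $U_{n,a}(y)$.
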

This is the duplication formula for the gamma function.  The next result gives an important bound for the inverse of the Dirichlet $L$-function.  
\begin{lemma}\label{bound_zeta(s)}
	Assume there exist a sequence of positive numbers $T$ with arbitrary large absolute value satisfying $|T-\Im(\rho)| >	\exp(-A |\Im(\rho)|/\log(|\Im(\rho)|))$ for every  non-trivial zero $\rho$ of $L(s, \psi')$, where $A$ is  some suitable positive constant. Then,
	\begin{equation*}
	\frac{1}{|L(\sigma + i T, \psi')|} < e^{B T},
	\end{equation*}
	for some suitable constant $0<B < \pi/4$.
\end{lemma}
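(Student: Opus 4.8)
The plan is to adapt the classical Titchmarsh-type lower bound for $|\zeta(s)|$ along carefully chosen horizontal lines to the Dirichlet $L$-function $L(s,\psi')$. I would write a non-trivial zero as $\rho=\beta+i\Im(\rho)$ and work with $s=\sigma+iT$ in a fixed vertical strip, say $-1\le\sigma\le 2$, using the line $\sigma=2$ as a reference: there the Dirichlet series for $L(s,\psi')$ converges absolutely, so $\log|L(2+iT,\psi')|=O(1)$.

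First I would invoke the standard localized partial-fraction expansion of the logarithmic derivative, valid for $s=\sigma+iT$ in the strip when $T$ is not the ordinate of a zero,
$$\frac{L'}{L}(s,\psi')=\sum_{|T-\Im(\rho)|\le 1}\frac{1}{s-\rho}+O\big(\log(M(|T|+2))\big),$$
which follows from the Hadamard product for $L(s,\psi')$ together with Stirling's formula (Lemma \ref{Stirling}); the same input shows that the number of zeros with $|T-\Im(\rho)|\le 1$ is $O(\log(M(|T|+2)))=O(\log T)$, with leading term $\tfrac{1}{\pi}\log\tfrac{MT}{2\pi}$ coming from the zero-counting formula.

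The decisive step is to integrate this identity in $\sigma$ from the reference line back to $s$, giving
$$\log|L(\sigma+iT,\psi')|=\log|L(2+iT,\psi')|-\Re\int_{\sigma}^{2}\frac{L'}{L}(u+iT,\psi')\,du.$$
Since $\int_\sigma^2(u+iT-\rho)^{-1}\,du=\log\frac{2+iT-\rho}{\sigma+iT-\rho}$, each zero in the window contributes $\log|\sigma+iT-\rho|-\log|2+iT-\rho|$ to the real part. The point is that integration converts a reciprocal distance into its logarithm: we have $|2+iT-\rho|\asymp 1$, while the gap hypothesis gives $|\sigma+iT-\rho|\ge|T-\Im(\rho)|>\exp(-A|\Im(\rho)|/\log|\Im(\rho)|)$, so each zero contributes a quantity bounded below by $-A|\Im(\rho)|/\log|\Im(\rho)|+O(1)$, of size only $O(AT/\log T)$, rather than the exponentially larger size that $L'/L$ itself would have at such a point.

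Finally I would sum over the $O(\log T)$ zeros in the window. Using their count $\tfrac{1}{\pi}\log\tfrac{MT}{2\pi}(1+o(1))$, the total contribution is at least $-\tfrac{1}{\pi}\log\tfrac{MT}{2\pi}\cdot\tfrac{AT}{\log T}(1+o(1))=-\tfrac{A}{\pi}T(1+o(1))$; combined with $\log|L(2+iT,\psi')|=O(1)$ and the $O(\log T)$ remainder this yields $\log(1/|L(\sigma+iT,\psi')|)\le\tfrac{A}{\pi}T(1+o(1))$, hence $1/|L(\sigma+iT,\psi')|<e^{BT}$ for any fixed $B>A/\pi$ once $T$ is large. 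Choosing the constant $A$ in the gap hypothesis with $A<\pi^2/4$ then forces $0<B<\pi/4$, as required. The main obstacle is precisely the bookkeeping of this leading constant: one must combine the sharp density of zeros near height $T$ with the logarithmic gain from integration to see that $\log(1/|L|)$ is linear in $T$ with the tunable coefficient $A/\pi$, which is exactly what makes the threshold $\pi/4$ attainable.
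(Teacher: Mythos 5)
The paper does not actually prove this lemma: it disposes of it with a one-line citation to Titchmarsh (p.~219), where the corresponding statement is proved for $\zeta(s)$ rather than for $L(s,\psi')$. Your proposal reconstructs precisely that classical argument and adapts it to the Dirichlet $L$-function, which is the right thing to do and is, in substance, the proof the authors are implicitly invoking: the localized partial-fraction expansion of $L'/L$ (from the Hadamard product plus Stirling), the $O(\log(M(|T|+2)))$ count of zeros in a unit window, integration back to the absolutely convergent line $\sigma=2$ so that each nearby zero contributes $\log|\sigma+iT-\rho|$ rather than a reciprocal distance, and then the gap hypothesis $|T-\Im(\rho)|>\exp(-A|\Im(\rho)|/\log|\Im(\rho)|)$ to bound each such logarithm below by $-AT/\log T+O(1)$. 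Summing over the $O(\log T)$ zeros gives $\log(1/|L(\sigma+iT,\psi')|)\ll AT$, and since $A$ is at one's disposal (the excluded set of ordinates in $[n,n+1]$ has measure $o(1)$ for any $A>0$, so such a sequence $T$ exists), the threshold $B<\pi/4$ is attainable. So the proposal is correct and supplies the argument the paper omits.

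One small over-claim worth fixing: you assert that the number of zeros with $|T-\Im(\rho)|\le 1$ is asymptotically $\frac{1}{\pi}\log\frac{MT}{2\pi}$, and you use this to pin the leading constant down to exactly $A/\pi$. The Riemann--von Mangoldt formula for $N(T,\psi')$ carries an error term $O(\log(MT))$, which is of the same order as the putative main term of the short-interval count, so only the upper bound $N(T+1,\psi')-N(T-1,\psi')\le C_0\log(MT)$ is actually available; no asymptotic for the unit-window count is known. This does not damage the proof --- you simply get $\log(1/|L(\sigma+iT,\psi')|)\le C_0AT(1+o(1))$ and choose $A<\pi/(4C_0)$ instead of $A<\pi^2/4$ --- but the sentence claiming the sharp coefficient $A/\pi$ should be weakened to an inequality with an unspecified absolute constant.
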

\begin{proof}
The proof of this lemma can be found in Titchmarsh \cite[p.~219]{Titchmarsh}.  
\end{proof}
The next result says that any $L$-function associated to a cusp form can be bounded by a suitable polynomial in a vertical strip.  
\begin{lemma}\label{bound}
	In any vertical strip $\sigma_0 \leq \sigma \leq b $,
	there exist a  
	constant $C(\sigma_0)$, such that 
	\begin{equation*}
	|L_f(\sigma + iT, \psi ) | \ll |T|^{C(\sigma_0)} 
	\end{equation*}
	as $|T| \rightarrow \infty.$
\end{lemma}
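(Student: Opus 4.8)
The plan is to prove the stated polynomial bound by the classical three-region convexity argument: a trivial estimate in the region of absolute convergence, a bound in the left half-plane extracted from the functional equation \eqref{FE} together with Stirling's formula, and an interpolation across the remaining strip via the Phragm\'en--Lindel\"of principle.

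First I would dispose of the right-hand region. For $\sigma \geq \frac{k+1}{2} + \delta$ with any fixed $\delta > 0$, the Dirichlet series defining $L_f(s, \psi)$ converges absolutely, since $|a_f(n)\psi(n)| \leq |a_f(n)|$; hence $|L_f(\sigma + iT, \psi)| \ll 1$ uniformly in $T$, which is the bound $|T|^0$. For the left-hand region $\sigma \leq \frac{k-1}{2} - \delta$, I would solve \eqref{FE} for $L_f(s, \psi)$ to obtain
\begin{align*}
L_f(s, \psi) = \frac{i^k \epsilon_{\chi}^2}{r}\, \chi(r)\, \psi(Q) \left( \frac{\sqrt{N}}{2\pi} \right)^{k-2s} \frac{\Gamma(k-s)}{\Gamma(s)}\, L_g(k-s, \overline{\psi}).
\end{align*}
Since $\Re(k-s) \geq \frac{k+1}{2} + \delta$ throughout this region, $L_g(k-s, \overline{\psi})$ lies in its half-plane of absolute convergence and is $O(1)$; the factor $(\sqrt{N}/2\pi)^{k-2s}$ is bounded on the strip; and Stirling's formula (Lemma \ref{Stirling}) yields
\begin{align*}
\left| \frac{\Gamma(k-s)}{\Gamma(s)} \right| = \frac{|\Gamma(k - \sigma - iT)|}{|\Gamma(\sigma + iT)|} \asymp |T|^{k - 2\sigma} \qquad (|T| \to \infty),
\end{align*}
the exponential factors $e^{-\pi|T|/2}$ cancelling. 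Hence $|L_f(\sigma + iT, \psi)| \ll |T|^{k - 2\sigma} \leq |T|^{k - 2\sigma_0}$ on the left region.

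Finally, I would interpolate across the intermediate strip. Being entire (as recorded in Section 2) and, by the two preceding estimates together with the functional equation, of finite order, $L_f(s, \psi)$ is bounded by fixed powers of $|T|$ on the two vertical lines $\sigma = \frac{k-1}{2} - \delta$ and $\sigma = \frac{k+1}{2} + \delta$. The Phragm\'en--Lindel\"of theorem for a vertical strip then furnishes a polynomial bound throughout the intermediate strip, with exponent the linear interpolant of the two boundary exponents. Combining the three regions gives $|L_f(\sigma + iT, \psi)| \ll |T|^{C(\sigma_0)}$ on any strip $\sigma_0 \leq \sigma \leq b$, where $C(\sigma_0)$ is the maximum of the resulting exponents and depends only on the left edge $\sigma_0$.

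The only points requiring genuine care --- routine rather than deep --- are verifying the finite-order hypothesis needed to invoke Phragm\'en--Lindel\"of and ensuring that the Stirling estimate for the gamma ratio is uniform in $\sigma$ across the strip; both follow at once from Lemma \ref{Stirling}. I therefore anticipate no serious obstacle, this being the standard convexity bound for the $L$-function of a cusp form, here merely adapted to the twisted, higher-level setting.
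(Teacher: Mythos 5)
Your argument is correct and is precisely the standard Phragm\'en--Lindel\"of convexity proof: trivial bound in the half-plane of absolute convergence, functional equation plus Stirling on the left, and interpolation in between. The paper does not prove this lemma itself but simply cites Iwaniec--Kowalski (Lemma 5.2, p.~97), whose proof is exactly the argument you give, so there is nothing to compare beyond noting that you have reconstructed the cited proof.
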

\begin{proof}
	One can see this result in \cite[p. 97, Lemma 5.2]{IK}.
\end{proof}  
Next, we state the functional equation for the Dirichlet $L$-function. 
\begin{lemma}\label{FEdirichlet}
Let $\psi'$ be a Dirichlet character of Modulo $M$. Then the Dirichlet $L$-function $L(s,\psi')= \sum_{n=1}^{\infty} \frac{\psi'(n)}{n^s}$ analytically extends to the whole complex plane and satisfies the functional equation: 
$$\left(\frac{\pi}{M}\right)^{-\frac{s+a}{2}} \Gamma\left(\frac{s+a}{2}\right) L(s, \psi') =\frac{\epsilon_\psi'}{i^a \sqrt{M}} \left(\frac{\pi}{M}\right)^{-\frac{1-s+a}{2}} \Gamma\left(\frac{1-s+a}{2}\right) L(1-s, \overline{\psi'}).$$
\end{lemma}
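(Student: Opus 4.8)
The plan is to prove the functional equation by the classical theta-transformation method, handling the even and odd cases ($a=0$ and $a=1$) in a unified way. Throughout I take $\psi'$ to be primitive modulo $M$ (as it is everywhere else in the paper); primitivity is what makes the Gauss sum $\epsilon_{\psi'}$ nonzero and supplies the clean Fourier expansion $\psi'(n) = \epsilon_{\overline{\psi'}}^{-1}\sum_{j \bmod M}\overline{\psi'}(j)\exp(2\pi i nj/M)$. For nontrivial $\psi'$ the continuation will be entire; the degenerate case $M=1$ is just the classical Riemann functional equation, where the constant term of the theta series produces a pole.

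First I would set up an integral representation of the completed $L$-function. Writing $\Lambda(s,\psi') := (\pi/M)^{-(s+a)/2}\Gamma((s+a)/2)L(s,\psi')$ and inserting $u = \pi n^2 t/M$ into $\Gamma((s+a)/2) = \int_0^\infty u^{(s+a)/2-1}e^{-u}\,du$, each term satisfies $\Gamma((s+a)/2)(\pi/M)^{-(s+a)/2}\psi'(n)n^{-s} = \int_0^\infty t^{(s+a)/2-1}\psi'(n)n^a e^{-\pi n^2 t/M}\,dt$. Summing over $n\ge 1$ for $\Re(s)$ large and using the parity relation $\psi'(-n)(-n)^a = \psi'(n)n^a$ together with the vanishing of the $n=0$ term, I obtain $\Lambda(s,\psi') = \frac{1}{2}\int_0^\infty t^{(s+a)/2-1}\theta_{\psi'}(t)\,dt$, where $\theta_{\psi'}(t) := \sum_{n=-\infty}^{\infty}\psi'(n)n^a e^{-\pi n^2 t/M}$.

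Second, the technical core is the theta transformation
\begin{equation*}
\theta_{\psi'}(1/t) = \frac{\epsilon_{\psi'}}{i^a\sqrt{M}}\,t^{a+1/2}\,\theta_{\overline{\psi'}}(t).
\end{equation*}
I would prove this by substituting the Fourier expansion of $\psi'$ and applying Poisson summation to the Gaussian sums $\sum_n n^a e^{-\pi n^2 t/M + 2\pi i nj/M}$. For $a=0$ this is the standard Jacobi transformation; for $a=1$ the extra factor of $n$ is produced by differentiating the Gaussian in the shift parameter, which is precisely where the factor $i^a=i$ enters, after which the Gauss sum recombines the exponentials into $\theta_{\overline{\psi'}}$. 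Then I split $\Lambda(s,\psi') = \frac{1}{2}\int_0^1 + \frac{1}{2}\int_1^\infty$, substitute $t\mapsto 1/t$ in the first integral, and apply the transformation; a direct exponent check turns the $(0,1)$-piece into $\frac{\epsilon_{\psi'}}{i^a\sqrt{M}}\int_1^\infty t^{(1-s+a)/2-1}\theta_{\overline{\psi'}}(t)\,dt$, giving the symmetric formula
\begin{equation*}
2\Lambda(s,\psi') = \int_1^\infty t^{(s+a)/2-1}\theta_{\psi'}(t)\,dt + \frac{\epsilon_{\psi'}}{i^a\sqrt{M}}\int_1^\infty t^{(1-s+a)/2-1}\theta_{\overline{\psi'}}(t)\,dt.
\end{equation*}
Since $\theta_{\psi'}$ and $\theta_{\overline{\psi'}}$ decay exponentially as $t\to\infty$ (no constant term, as $\psi'$ is nontrivial), both integrals are entire in $s$, yielding the analytic continuation. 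Writing the analogous identity for $2\Lambda(1-s,\overline{\psi'})$ and comparing the two, the functional equation $\Lambda(s,\psi') = \frac{\epsilon_{\psi'}}{i^a\sqrt{M}}\Lambda(1-s,\overline{\psi'})$ reduces to the Gauss-sum identity $\epsilon_{\psi'}\epsilon_{\overline{\psi'}} = (-1)^a M$, which follows from $|\epsilon_{\psi'}|^2 = M$ and $\epsilon_{\overline{\psi'}} = \psi'(-1)\overline{\epsilon_{\psi'}} = (-1)^a\overline{\epsilon_{\psi'}}$. Restoring the prefactors $(\pi/M)^{\mp(s+a)/2}$ gives exactly the stated identity.

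The main obstacle is the theta transformation, and within it the odd case $a=1$: assembling the factor $i^a$ and the correct Gauss sum after Poisson summation requires care with the Fourier transform of $x\mapsto x\,e^{-\pi x^2 t/M}$ and with the conjugate-character Gauss sum. Everything else is bookkeeping on the exponents in the Mellin split and the standard magnitude identity $|\epsilon_{\psi'}|^2=M$.
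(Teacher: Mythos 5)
Your proof is correct, but note that the paper offers no argument of its own for this lemma: its ``proof'' is a one-line citation to Iwaniec--Kowalski. What you have written out is, in essence, the argument contained in that reference (and in Davenport): the Mellin representation of the completed $L$-function by the theta series $\theta_{\psi'}(t)=\sum_{n\in\mathbb{Z}}\psi'(n)n^a e^{-\pi n^2 t/M}$, the Jacobi-type transformation $\theta_{\psi'}(1/t)=\frac{\epsilon_{\psi'}}{i^a\sqrt{M}}\,t^{a+1/2}\,\theta_{\overline{\psi'}}(t)$ proved by Poisson summation against the Fourier expansion of the primitive character, the split of the Mellin integral at $t=1$ (which simultaneously gives the entire continuation and the symmetric two-integral formula), and the reduction of the functional equation to the Gauss-sum identity $\epsilon_{\psi'}\epsilon_{\overline{\psi'}}=(-1)^a M$. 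Your bookkeeping checks out: the exponent computation $-(s+a)/2-1+a+1/2=(1-s+a)/2-1$ is right, and the closing identity follows as you say from $\epsilon_{\overline{\psi'}}=\psi'(-1)\overline{\epsilon_{\psi'}}$ together with $|\epsilon_{\psi'}|^2=M$. One point in your favor that the paper glosses over: as literally stated, for an arbitrary Dirichlet character modulo $M$, the lemma is false; the functional equation in this clean form requires $\psi'$ primitive, both for the Fourier expansion $\psi'(n)=\epsilon_{\overline{\psi'}}^{-1}\sum_{j}\overline{\psi'}(j)\exp(2\pi i nj/M)$ to hold at all $n$ and for $|\epsilon_{\psi'}|^2=M$. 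You correctly flag this hypothesis, which is indeed how $\psi'$ is used everywhere else in the paper.
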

\begin{proof}
We refer \cite{IK} for the proof. 
\end{proof}
Now we are ready to give the proof of the main theorem.  
\begin{proof}[Theorem {\rm \ref{Main theorem2}}][] 
First, we note that the Lambert series $$\sum_{n=1}^{\infty}[a_f(n)\psi(n)*\mu(n)\psi'(n)]\exp(-ny)$$ converges absolutely and uniformly for any $y>0$.  
Now using inverse Mellin transform for $\Gamma(s)$,  one can write
\begin{align}
 \sum_{n=1}^{\infty}[a_f(n)\psi(n)*\mu(n)\psi'(n)]\exp(-ny) = &\sum_{n=1}^{\infty}[a_f(n)\psi(n)*\mu(n)\psi'(n)]\frac{1}{2 \pi i} \int_{c-i \infty}^{c+ i \infty} \frac{\Gamma(s)}{ (ny)^{s}} \mathrm{d}s \nonumber \\
&= \frac{1}{2 \pi i} \int_{c-i \infty}^{c+ i \infty}   \frac{ \Gamma(s)  L_f(s, \psi)}{L(s,\psi')} y^{-s} {\rm d}s,  \label{firstintegral}
\end{align}
here interchange of summation and integration is possible only when  we start with $\Re(s)=c > \frac{k+1}{2}$. 
Next,  to simplify this line integral we shall take help of contour integration and use Cauchy's residue theorem.  
Consider the contour $\mathcal{C}_T$ determined by the line segments $[c-iT, c+iT], [c+i T, \lambda + i T], [\lambda + i T,  \lambda - i T]$, and $ [\lambda - i T, c-iT]$, where $T$ is some large positive real number and $-1< \lambda <0$.  Before using Cauchy's residue theorem,  let us identify the poles of the integrand function.  From \eqref{FE},  it follows that $\Gamma(s) L_f(s, \psi)$ has no poles since $\Lambda_f(s, \psi) $ is an entire function. 
Hence poles of the integrand are only due to the zeros of $L(s,\psi')$. Note that, if $\psi'$ is an even character of modulus $M >1$,  then $L(s, \psi')$ has trivial zeros at $0,  -2,  -4,  \cdots$.   And if $\psi'$ is an odd character,  then $L(s, \psi')$ has trivial zeros at $-1,  -3,  -5,  \cdots$. 
Again,  we know that the non-trivial zeros of $L(s, \psi')$ lie in the strip $0 < \Re(s) <1$.  Therefore,  applying Cauchy's residue theorem, we have
\begin{align}\label{CRT}
\frac{1}{2\pi i} \int_{\mathcal{C}_T}  \frac{ \Gamma(s) L_f(s, \psi)}{L(s,\psi')} y^{-s} {\rm d}s = \mathcal{R}_{T}(y)+R_0,
\end{align}
where $\mathcal{R}_{T}(y)$ denotes the residual function, which includes finitely many terms contributed by the non-trivial zeros $\rho$ of $L(s, \psi')$ with $|\Im(\rho)| < T$ and $R_0$ is the residue at $s=0$.
Now,  we can write 
\begin{equation}\label{roundequation}
 \int_{\mathcal{C}_T} \frac{ \Gamma(s) L_f(s, \psi)}{L(s,\psi')}  y^{-s} {\rm d}s=
 \left( \int_{c-i T}^{c+ i T} +\int_{c+i T}^{\lambda+ i T}+\int_{\lambda+ i T}^{\lambda- i T}+ \int_{\lambda- i T}^{c-i T}\right) \frac{ \Gamma(s) L_f(s, \psi)}{L(s,\psi')}  y^{-s} {\rm d}s.
\end{equation}
Next, utilizing Lemma \ref{Stirling},  \ref{bound_zeta(s)} and \ref{bound},  one can show that both of the horizontal integrals 
\begin{align*}
H_1(T, y):=  \frac{1}{2 \pi i} \int_{c+i T}^{\lambda + i T}   \frac{\Gamma(s)  L_f(s, \psi)}{L(s,\psi')} y^{-s} {\rm d}s, \,\,
H_2(T, y):=  \frac{1}{2 \pi i}  \int_{\Gamma(s)  \lambda - i T}^{c -iT}   \frac{ \Gamma(s)  L_f(s, \psi)}{L(s,\psi')} y^{-s} {\rm d}s, 
\end{align*}
tend to zero as $T \rightarrow \infty$ through those values of $T$ which satisfy $|T-\Im(\rho)| > \exp(-A |\Im(\rho)|/\log(|\Im(\rho)|))$.
Therefore, letting $T \rightarrow \infty$ in  \eqref{CRT}  and in view of  \eqref{firstintegral} and \eqref{roundequation},  we arrive at
\begin{equation}\label{secondintegral }
\sum_{n=1}^{\infty}[a_f(n)\psi(n)*\mu(n)\psi'(n)]e^{-ny} 
= \frac{1}{2 \pi i} \int_{\lambda-i \infty}^{\lambda+ i \infty}    \frac{\Gamma(s) L_f(s, \psi)}{L(s,\psi')} y^{-s} {\rm d}s+ \mathcal{R}(y)+R_0,
\end{equation}
where the contribution of the residual term $R_0$ will be taken into account only when $\psi'$ is an even character with modulus $M>1$.  Therefore,  we have 
\begin{equation}\label{R_0}
	R_0= \lim_{s \rightarrow 0} s \, \Gamma(s) \frac{L_f(s, \psi)}{L(s,\psi')} y^{-s} 
	= \begin{cases}
	\frac{L_f'(0, \psi)}{L'(0,\psi')},              & {\rm if } \, \, a=0,  M>1, \\
	          0, & {\rm otherwise}.
	\end{cases} 
\end{equation} 
The function $\mathcal{R}(y)$ is the sum of the residual terms coming from the non-trivial zeros $\rho$ of $L(s, \psi')$.  This term can be evaluated in the following way:
\begin{align}\label{R(y)}
\mathcal{R}(y)  = \sum_{\rho} \lim_{s \rightarrow \rho} (s - \rho)  \frac{ \Gamma(s) L_f(s, \psi)}{L(s,\psi')} y^{-s} 
= \sum_{\rho}  \frac{ \Gamma(\rho) L_f(\rho, \psi)}{L'(\rho,\psi')y^{-\rho} }, 
\end{align} 
where the summation runs over the non-trivial zeros $\rho$ of $L(s, \psi')$.   Here we note that we have used the assumption that all non-trivial zeros of $L(s,  \psi')$ are simple.  Even if we do not assume the simplicity hypothesis,  then also one can figure out this residual term.  
Now we shall try to evaluate the left vertical integral
 \begin{equation}\label{Vfirst}
V(y) := \frac{1}{2 \pi i} \int_{\lambda-i \infty}^{\lambda+ i \infty}  \frac{  \Gamma(s)  L_f(s, \psi)}{L(s,\psi')} y^{-s} {\rm d}s,
\end{equation} 
where $-1 < \lambda <0$.  The functional equation \eqref{FE} of $L_f(s, \psi)$ suggests that
\begin{align}\label{apply functional}
\Gamma(s) L_f(s, \psi) = i^k  \left(\frac{\sqrt{N}}{2 \pi}\right)^{k-2s} \frac{\epsilon_{\psi}^2 }{r} \chi(r) \psi(Q)  \Gamma(k-s) L_g(k-s, \overline{\psi}).
\end{align}
Again,  Lemma \ref{FEdirichlet} yields that
\begin{equation}\label{apply functional Dirichlet}
\frac{1}{L(s, \psi')} = \frac{i^a \sqrt{M}}{\epsilon_{\psi'}} \left(\frac{\pi}{M}\right)^\frac{1-2s}{2} \frac{\Gamma(\frac{s+a}{2})}{\Gamma(\frac{1-s+a}{2})} \frac{1}{L(1-s, \overline{\psi'})}.
\end{equation}
Now substituting \eqref{apply functional} and \eqref{apply functional Dirichlet} in \eqref{Vfirst} and simplifying, we get
\begin{equation*}
V(y)=  \frac{ \sqrt{\pi} i^{k+a} N^{\frac{k}{2}} \epsilon_{\psi}^2  \chi(r) \psi(Q)  }{ 2^k \pi^k r \epsilon_{\psi'}} \,\frac{1}{2 \pi i}   \int_{\lambda-i \infty}^{\lambda+ i \infty}   \frac{\Gamma(\frac{s+a}{2})   \Gamma(k-s)}{\Gamma(\frac{1-s+a}{2})} \frac{L_g(k-s, \overline{\psi})}{L(1-s, \overline{\psi'})}  \left(\frac{ N y}{4 \pi M}\right)^{-s} {\rm d}s. 
\end{equation*}
After a change of variable from $s$ to $k-s$, $V(y)$ takes the following form:
\begin{equation}\label{V(y) second form}
V(y)=  C_{k, \chi, \psi, \psi'}\,\frac{1}{2 \pi i}   \int_{k-\lambda-i \infty}^{k-\lambda+ i \infty}   \frac{\Gamma(s) \Gamma \left(\frac{k-s+a}{2}\right)   }{\Gamma\left(\frac{s-k+1+a}{2}\right)} \frac{L_g(s, \overline{\psi})}{L(s-k+1, \overline{\psi'})}    \left(\frac{ N y}{4 \pi M}\right)^{s} {\rm d}s,
\end{equation}
where 
\begin{equation}\label{Cchi}
 C_{k, \chi, \psi, \psi'} :=  \frac{ \sqrt{\pi} i^{k+a} \epsilon_{\psi}^2  \chi(r) \psi(Q)  }{  r \epsilon_{\psi'}} \left( \frac{2M}{y \sqrt{N}}  \right)^k.
\end{equation}
Note that the Dirichlet series expansion of $L_g(s, \overline{\psi})$ and $L(s-k+1, \overline{\psi'})$ are absolutely convergent on the line  $\Re(s) = k- \lambda $ since $k-\lambda > k$ as $-1 < \lambda < 0$.  Therefore,  on the line $\Re(s) = k- \lambda $,  with the help of the Dirichlet series expansion,  one can write
\begin{equation}\label{finaldirichlet}
\frac{L_g(s, \overline{\psi})}{L(s-k+1, \overline{\psi'})} = \sum_{n=1}^{\infty} \frac{a_{g}(n)  \overline{\psi}(n) * \mu_k(n)\overline{\psi'}(n)}{n^s}.
\end{equation}
Now,  plugging \eqref{finaldirichlet} in \eqref{V(y) second form} and then taking summation outside of integration,  we get
\begin{equation}\label{Vlast}
V(y)=  C_{k,\chi, \psi, \psi'} \,\sum_{n=1}^{\infty} [a_{g}(n)  \overline{\psi}(n)  * \mu_k(n)\overline{\psi'}(n)]  \frac{1}{2 \pi i}  \int_{k-\lambda-i \infty}^{k-\lambda+ i \infty}   \frac{\Gamma(s) \Gamma \left(\frac{k-s+a}{2}\right)   }{\Gamma\left(\frac{s-k+1+a}{2}\right)} \left(\frac{ N y}{4 \pi M n}\right)^{s} {\rm d}s. 
\end{equation}
To simplify $V(y)$ further,  we shall concentrate on the following integral:
\begin{equation}\label{una}
U_{n,a}(y) := \frac{1}{2 \pi i} \int_{k-\lambda-i \infty}^{k-\lambda+ i \infty} \frac{\Gamma(s) \Gamma \left(\frac{k-s+a}{2}\right)   }{\Gamma\left(\frac{s-k+1+a}{2}\right)} \left(\frac{ N y}{4 \pi M n}\right)^{s}  {\rm d}s,
\end{equation}
by doing a change of variable from $s$ to $2s$ and invoking Lemma \ref{dupliation},  it takes the shape as
\begin{equation}\label{U}
U_{n,a}(y)= \frac{1}{\sqrt{\pi}} \frac{1}{2 \pi i} \int_{\frac{k-\lambda}{2}- i \infty}^{\frac{k-\lambda}{2}+ i \infty} \frac{\Gamma(s) \Gamma\left(s+\frac{1}{2}\right) \Gamma \left(\frac{k+a}{2}-s\right)   }{\Gamma\left(s+\frac{1-k+a}{2}\right)}z^{s} {\rm d}s,
\end{equation}
where  $z= \left(\frac{ N y}{2 \pi M n}\right)^2$.  To make more comprehensible form for $U_{n,a}(y)$,  we must employ the definition of the Meijer $G$-function.  Unfortunately,  by analysing the poles of the integrand of $U_{n,a}(y)$ we can verify that the line of integration $\Re(s) = \frac{k-\lambda}{2}$ does not distinguish all the poles of $\Gamma\left( \frac{k+a}{2}-s  \right)$ from the poles of $\Gamma(s) \Gamma\left( s + \frac{1}{2}  \right)$.  Note that $\frac{k}{2} < \frac{k- \lambda}{2} < \frac{k+1}{2}$ as $ -1 < \lambda  <0$.  Thus,  we shift the line of integration $\Re(s) = \frac{k-\lambda}{2}$ to the line $\Re(s) = c'$ where $  \frac{k}{2} -1 < c' < \frac{k}{2}$.   Now we can see that the line of integration $\Re(s) = c'$ does separate the poles.  At this moment,  we construct a new rectangular contour $\mathfrak{C}$ joining the line segments $[c'-i T,  \frac{k-\lambda}{2} -i T],  \left[ \frac{k- \lambda }{2} - i T,  \frac{k- \lambda }{2} + i T \right],  \left[ \frac{k- \lambda }{2} + i T,  c'+ i T \right] ,  $ and $[ c'+ i T,  c'-i T]$ and employing Cacuchy's residue theorem,  we have
\begin{align}\label{2nd use_CRT}
\frac{1}{2\pi i} \int_{\mathfrak{C}} \frac{\Gamma(s) \Gamma\left(s+\frac{1}{2}\right) \Gamma \left(\frac{k+a}{2}-s\right)   }{\Gamma\left(s+\frac{1-k+a}{2}\right)}z^{s} {\rm d}s =  R_\frac{k}{2},
\end{align}
where $R_{ \frac{k}{2}}$ is the residue at $s=\frac{k}{2}$,  and it can be evaluated as
\begin{align}\label{residue at k by 2}
R_{\frac{k}{2}} = \begin{cases}
- \frac{\Gamma\left(\frac{k}{2} \right) \Gamma\left(\frac{k+1}{2}\right)}{ \Gamma\left(\frac{1}{2} + a\right)} z^{\frac{k}{2} } & {\rm if}\,\, a=0,  \\
0,  & {\rm if}\,\, a=1.
\end{cases}
\end{align}
Now using Stirling's formula for gamma function,  one can show that the contribution of the horizontal integrals vanish as $T \rightarrow \infty$.  Therefore, letting $T \rightarrow \infty$ in \eqref{2nd use_CRT},  we  have
\begin{align}\label{with residue at k by 2}
 \frac{1}{2 \pi i} \int_{\frac{k-\lambda}{2}- i \infty}^{\frac{k-\lambda}{2}+ i \infty}  \frac{\Gamma(s) \Gamma\left(s+\frac{1}{2}\right) \Gamma \left(\frac{k+a}{2}-s\right)   }{\Gamma\left(s+\frac{1-k+a}{2}\right)}z^{s} {\rm d}s & =   \frac{1}{2 \pi i} \int_{c'- i \infty}^{c'+ i \infty}  \frac{\Gamma(s) \Gamma\left(s+\frac{1}{2}\right) \Gamma \left(\frac{k+a}{2}-s\right)   }{\Gamma\left(s+\frac{1-k+a}{2}\right)}z^{s} {\rm d}s \nonumber \\ 
& + R_\frac{k}{2}.
\end{align}
Now utilizing the definition \eqref{MeijerG} of the Meijer $G$-function and verifying all the necessary conditions,  one can show that
\begin{align}\label{apply_Meijer G}
 \frac{1}{2 \pi i} \int_{c'- i \infty}^{c'+ i \infty}  \frac{\Gamma(s) \Gamma\left(s+\frac{1}{2}\right) \Gamma \left(\frac{k+a}{2}-s\right)   }{\Gamma\left(s+\frac{1-k+a}{2}\right)}z^{s} {\rm d}s = G_{2,2}^{\,1,2} \!\left(   \,\begin{matrix} 1 , \frac{1}{2} \\ \frac{k+a}{2}, \frac{1+k-a}{2}\end{matrix} \; \Big| z   \right).
\end{align}
At this situation,  we shall invoke Slater's theorem \eqref{Slater} to simplify Meijer $G$-function in terms of hypergeometric functions.  Thus after a significant simplification,  we obtain
\begin{align}\label{Use_Slater}
G_{2,2}^{\,1,2} \!\left(   \,\begin{matrix} 1 , \frac{1}{2} \\ \frac{k+a}{2}, \frac{1+k-a}{2}\end{matrix} \; \Big| z   \right) = z^{\frac{k+a}{2}} \frac{\Gamma\left( \frac{k+a}{2} \right) \Gamma\left( \frac{1+k+a}{2} \right)}{\Gamma\left(\frac{1}{2} + a\right) }  {}_2F_1 \left(\frac{k+a}{2}, \frac{k+1+a}{2}; \frac{1+2a }{2}; -z \right)  
\end{align}
Now substituting $z= \left(\frac{ N y}{2 \pi M n}\right)^2$ in \eqref{Use_Slater} and in view of \eqref{residue at k by 2}, \eqref{with residue at k by 2} and \eqref{apply_Meijer G},  the integral $U_{n,a}(y)$ becomes
\begin{align}\label{final U}
U_{n,a}(y)  & = \frac{2}{\sqrt{\pi}} \left(\frac{ N y}{2 \pi M n}\right)^{k+a} \frac{\Gamma(k+a)}{2^k} \left[ {}_2F_1 \left(\frac{k+a}{2}, \frac{k+1+a}{2}; \frac{1+2a }{2}; -z \right)  - (1-a) \right]. 
\end{align}
Plugging the above expression of $U_{n,a}(y)$ in \eqref{Vlast},  the final expression for the left vertical integral $V(y)$  reduces to
\begin{align} \label{Final_left vertical}
V(y)  & = 2 N^{\frac{k}{2}+a}  \Gamma(k+a)  \left( \frac{y}{M}  \right)^a \left(\frac{i}{2\pi}\right)^{k+a} \frac{\chi(r) \psi(Q) \epsilon_\psi^2}{r\epsilon_{\psi'}}  \sum_{n=1}^{\infty}\frac{[a_g(n)\overline{\psi}(n)*\mu_k(n)\overline{\psi'}(n)]}{n^{k+a}} \nonumber  \\
		& \quad \qquad \qquad \times	 \left[ {}_2F_1 \left(\frac{k+a}{2}, \frac{k+1+a}{2}; \frac{1+2a }{2}; -\frac{N^2y^2}{4 M^2n^2\pi^2} \right)-(1-a) \right].
\end{align}
Using the definition \eqref{hyper} of the hypergeometric series one can show that the above infinite series is indeed convergent.
Finally,  combining \eqref{secondintegral },  \eqref{R_0},  \eqref{R(y)} and \eqref{Final_left vertical},  we finish the proof of Theorem \ref{Main theorem2}. 
\end{proof} 

The proof of Corollary \ref{M=r=1},  \ref{Q=1,  M=r},  and \ref{Q=r=1} are immediate implications of Theorem \ref{Main theorem2}.  We left it for the readers to verify.  

\begin{proof}[Corollary  {\rm \ref{maincorollary}}][]
Making use of the asymptotic expansion of $_2F_1$,  for $y \rightarrow 0^+$,  we have
\begin{align}\label{double series}
{}_2F_1 \left(\frac{k+a}{2}, \frac{k+1+a}{2}; \frac{1+2a }{2}; -\frac{N^2y^2}{2^2M^2n^2\pi^2} \right)  & = \sum_{m=0}^{M'-1} \frac{\left(\frac{k+a}{2}\right)_m \left(\frac{k+1+a}{2}\right)_m}{\left(\frac{1+2a }{2}\right)_m m!}\left(\frac{-Ny}{2Mn\pi} \right)^{2m} \nonumber  \\
& + O_{f, \psi, \psi'}\left(\left(\frac{y}{n}\right)^{2M'}\right),
\end{align}
where $M'$ is any large positive integer.
Now employing \eqref{double series} in Theorem \ref{Main theorem2},  we get
\begin{equation}
\sum_{n=1}^{\infty}[a_f(n)\psi(n)*\mu(n)\psi'(n)]\exp(-ny)=R_0+\mathcal{R}(y)+ \sum_{m=0}^{M'-1} B_{m,  a} y^{2m+a} +O_{f, \psi, \psi'}(y^{2M'+a}),
\end{equation}
where $B_{m, a}$'s are constants can be evaluated by the following formula: For $m=0$, 
 $$B_{m,  a}=   2 a N^{k/2}  \Gamma(k+a)  \left( \frac{y N}{M}  \right)^a \left(\frac{i}{2\pi}\right)^{k+a} \frac{\chi(r) \psi(Q) \epsilon_\psi^2}{r\epsilon_{\psi'}}  
  \sum_{n=1}^{\infty}\frac{ \,[ a_g(n)\overline{\psi}(n)*\mu_k(n)\overline{\psi'}(n)]}{n^{k+a+2m}},$$ and for $m>1$,
 \begin{align}\label{cma}
 B_{m, a}   =  \frac{ 2  N^{k/2}  \Gamma(k+a)}{ (2\pi)^2m}  \left( \frac{y N}{M}  \right)^{a+2m}  \left(\frac{i}{2\pi}\right)^{k+a} & \frac{\chi(r) \psi(Q) \epsilon_\psi^2}{r\epsilon_{\psi'}}    \frac{( \frac{k+a}{2})_m(\frac{k+1+a}{2})_m\,}{(\frac{1+2a}{2})_m m!} \nonumber \\
 & \times \sum_{n=1}^{\infty}\frac{a_g(n)\overline{\psi}(n)*\mu_k(n)\overline{\psi'}(n)}{n^{k+a+2m}}.
\end{align}
Here we note that both of the  above infinite series are absolutely convergent.  
 Now we assume $f$ is a normalized Hecke eigenform and $\chi, \psi$ and $\psi'$ are real characters.  Then for any complex number $s$,  we get $\frac{L_f(\overline{s}, \psi)\Gamma(\overline{s})}{L'(\overline{s}, \psi')}\frac{1}{y^{\overline{s}}}=	\overline{\left(\frac{L_f(s, \psi)\Gamma(s)}{L'(s, \psi')}\frac{1}{y^s}\right)}$.  Another important observation is that if $\frac{1}{2}+i t_n$ is a non-trivial zero of $L(s,  \psi')$,  then $\frac{1}{2}-it_n$ is also a non-trivial zero of $L(s,  \psi')$ since $\psi'$ is a real character.  
  Therefore,  assuming generalized Riemann hypothesis and the simplicity hypothesis,  we can write
 $$\mathcal{R}(y)= \sum_{\rho_n=\frac{1}{2}+it_n, t_n>0} 2 \Re\left(\frac{L_f(\rho_n, \psi)\Gamma(\rho_n)}{L'(\rho_n, \psi')}y^{-\rho_n}\right),$$
where the sum is running over all non-trivial zeros of $L(s,\psi')$ in the upper critical line.
Finally, representing $2 L_f(\rho_n, \psi)\Gamma(\rho_n) {L'(\rho_n, \psi')}^{-1}$ in the polar form by $r_n\exp(i\theta_n)$ and simplifying we complete the proof of Corollary \ref{maincorollary}.
\end{proof}

\section{Final remarks}
In 2018,  Banerjee and Chakraborty \cite{BC} established an asymptotic expansion for the Lambert series $\sum_{n=1}^{\infty}\lambda_f(n) \overline{ \lambda_g(n)} \exp(-n y)$,  where $\lambda_f(n)$ and $\lambda_g(n)$ are $n$th Fourier coefficient of Hecke Mass cusp forms $f$ and $g$ respectively.  Recently,  the same Lambert series corresponding to the Fourier coefficents of Hilbert modular forms has been studied by Agnihotri \cite{Agni}. 
In this paper,  we have established an exact formula the Lambert series
 $A_f(y)=\sum_{n=1}^{\infty}[a_f(n)\psi(n)*\mu(n)\psi'(n)]\exp(-ny),$ in terms of the non-trivial zeros of $L(s, \psi')$,  where $a_f(n)$ is the $n$th Fourier coefficient of a cusp form $f$ over a congruence subgroup, and $\psi$ and $ \psi'$ are primitive Dirichlet characters.  Interestingly,  we have observed that $y^{1/2}A_f(y)$ has an oscillatory behaviour as $y \rightarrow 0^{+}$.  It would be desirable to study similar Lambert series associated to the $n$th Fourier coefficient of other automorphic forms.  

\begin{center}
\begin{table}[h]
\caption{Verification of Corollary \ref{Q=r=1}: Let $\psi'$ be a Dirichlet character modulo $5$ with $\psi'(\bar{1})=\psi'(\bar{4})=1$ and 
$\psi'(\bar{2})=\psi'(\bar{3})=-1$. 
We took $f(z)=\Delta(z)$ as Ramanujan delta function, and the left-hand side and right-hand side series over $n$ with only first $2000$ terms, and the  sum over $\rho$ for $\mathcal{R}(y)$ is taken over only $22$ terms.}
\label{table}
\renewcommand{\arraystretch}{1}
{\small
\begin{tabular}{|l|l|l|l|l|l|}
\hline
  $y$ & Left-hand side  & Right-hand side  \\
  \hline 
  $1.589$          & $0.02160533841$   &    $0.02160532545$ \\
  \hline
  $1+\sqrt{5}$     & $0.01599519746$   &    $0.01599520708$  \\      
\hline
 $0.0749$          & $0.03507904537$   &    $0.03507917507$  \\
\hline
$4-\pi$            & $0.01767636417$   &    $0.01767636262$ \\
\hline
  $\pi^{\sqrt{3}}$ & $0.00069009521$   &    $0.00069009799$ \\
\hline
   $5.7395$           & $0.00298669912$      & $0.00298669847$ \\
\hline
\end{tabular}}

\end{table}
\end{center}

\textbf{Acknowledgements.}
The first author wants to thank  SERB for the Start-Up Research Grant SRG/2020/000144. 
The second and third author wish to thank the National Institute of Technology Karnataka,  for providing conductive research environment.

\end{document}